\DeclareMathOperator*{\sat}{sat} 
\DeclareMathOperator*{\emin}{emin}
\DeclareMathOperator*{\ex}{ex}
\DeclareMathOperator*{\cd}{cd}
\titleformat{\section}{\large\bfseries}{\thesection.}{1em}{}
\theoremstyle{plain}
\newtheorem{theorem}{Theorem}[section]
\newtheorem{lemma}[theorem]{Lemma}
\newtheorem{proposition}[theorem]{Proposition}
\newtheorem{conjecture}[theorem]{Conjecture}
\theoremstyle{definition}
\newtheorem{definition}[theorem]{Definition}
\title{Saturation Number of Trees in the Hypercube}
\author{Kavish Gandhi and Chiheon Kim}
\date{\today}
\begin{document}

\maketitle
\begin{abstract} 
A graph $H^{\prime}$ is $(H, G)$-saturated if it is $G$-free and the addition of any edge of $H$ not in $H^{\prime}$ creates a copy of $G$. The \emph{saturation number} $\sat(H, G)$ is the minimum number of edges in an $(H, G)$-saturated graph. We investigate bounds on the saturation number of trees $T$ in the $n$-dimensional hypercube $Q_n$. We first present a general lower bound on the saturation number based on the minimum degree of non-leaves. From there, we suggest two general methods for constructing $T$-saturated subgraphs of $Q_n$ and prove nontrivial upper bounds for specific types of trees, including paths, generalized stars, and certain caterpillars under a restriction on minimum degree with respect to diameter.
\end{abstract}

\section{Introduction}
\label{sec:intro}
In 1941, Paul Tur{\'a}n \cite{Turan} proved one of the first important results in extremal graph theory, explicitly determining the maximum number of edges in a $K_{r+1}$-free subgraph of $K_n$.  This result sparked the study of what is now known as the extremal number.  In particular, the \emph{extremal number} $\ex(H, G)$ is defined as the maximum number of edges in a subgraph of a host graph $H$ that does not contains some forbidden graph $G$.

The extremal number also has a natural opposite formulation.  In particular, consider the following definition, where $E(G)$ represents the set of edges in a graph $G$:
\begin{definition}
A subgraph $H^{\prime}$ of $H$ is \emph{$(H, G)$-saturated} if it is $G$-free, but the addition of any edge in $E(H) \ \backslash \ E(H^{\prime})$ to $H^{\prime}$ creates a copy of $G$.
\end{definition}

The extremal number defines the \emph{maximum} number of edges in such a saturated graph, but we can also ask the converse question: what is the \emph{minimum} number of edges in a saturated graph?  To this end, the \emph{saturation number} $\sat(H, G)$ is defined as the minimum number of edges in a $G$-saturated subgraph of $H$.

For both the extremal number and the saturation number, the most well-studied host graphs have been the complete graph $K_n$ and the complete bipartite graph $K_{m, n}$.  We, on the other hand, study subgraphs of the \emph{hypercube} $Q_n$, the regular graph with vertex set $\{0, 1\}^n$ and edge set consisting of all pairs of vertices differing in exactly one coordinate.  Erd{\"o}s \cite{Erdos} was among the first to study $\ex(Q_n, G)$, specifically trying to determine $\ex(Q_n, C_4)$.  This question is still open today; the best bounds, due to Brass et al.~\cite{Brass} and Baber \cite{Baber}, are $(n+\sqrt{n})2^{n-2} \le \ex(Q_n, C_4) \le (0.60318 + o(1))n2^{n-1}$.\footnote{For the sake of accuracy, note that this lower bound is only valid when $n$ is a power of $4$.} Erd{\"o}s also conjectured that, for all $k \ge 2$, $\ex(Q_n, C_{2k}) = o(e(Q_n))$, but was proven wrong by Chung \cite{Chung}, who showed that $\ex(Q_n, C_6) \ge \frac{1}{4}e(Q_n)$.\footnote{Note that Conder later improved this lower bound, showing that $\ex(Q_n, C_6) \ge \frac{1}{3}e(Q_n)$.} However, she also showed that $\ex(Q_n, C_{4t}) = o(e(Q_n))$ for $t\ge 2$.  These results were expanded by F{\"u}redi and {\"O}zkahya \cite{Furedi}, who proved that $\ex(Q_n, C_{4t+2}) = o(e(Q_n))$ for $t\ge 3$, a result later shown in a more general framework by Conlon \cite{Conlon}. The only remaining unresolved asymptotic case is $C_{10}$, which, despite some progress \cite{Alon}, remains open.

By and large, though $\ex(Q_n, G)$ and in particular $\ex(Q_n, C_{2k})$ have been well studied, $\sat(Q_n, G)$ has not.  The best early result was of Choi and Guan \cite{ChoiGuan}, who showed that 
\[\lim_{n \to \infty}\frac{\sat(Q_n, Q_2)}{e(Q_n)} \le \frac{1}{4}.\]
Recently, Johnson and Pinto \cite{JohnsonPinto} improved this result, showing that 
\[\lim_{n \to \infty}\frac{\sat(Q_n, Q_m)}{e(Q_n)} = 0.\]
Morrison, Noel, and Scott \cite{Morrison}, even more recently, improved these bounds further, showing that
\[(m-1+o(1))\cdot 2^n \le \sat(Q_n, Q_m) < (1+o(1))72m^22^n.\]
However, little work has been done in determining $\sat(Q_n, T)$ for trees $T$, and this is the main concern of this paper. We first present a lower bound based on the minimum degree of non-leaves of a graph, and an upper bound for trees decomposable into subtrees with smaller cubical dimension, obtained by considering disjoint subcubes. We then use a variation on the Hamming code to find improved bounds for specific trees whose minimum degree is large compared to their diameter. This construction appears very different from constructions using disjoint subcubes, and we suspect that it can be extended to obtain tight upper bounds for all trees. 

The outline of this paper is the following.  In Section $\ref{sec:definitions}$, we define terms used frequently in the paper. Then, in Section $\ref{sec:outline}$, we present our results and outline the major ideas used in their proofs.  From there, in Sections $\ref{sec:genresults}$, $\ref{sec:treesdisjoint}$, and $\ref{sec:treeshamming}$, we present the proofs of all of these results, along with useful lemmata.  Finally, in Section $\ref{sec:conclusion}$, we summarize our work and suggest future directions.

\vspace{-0.1in}
\section{Definitions}
\label{sec:definitions}
In this section, we provide definitions for some standard terms used frequently in our paper. We first define some specific types of trees.  Note that our definitions of certain standard trees may differ slightly from their traditional definitions, so we present these too.
\begin{definition}
A \emph{path} $P_k$ is a sequence of vertices $v_1 v_2 \ldots v_{k+1}$ connected consecutively by edges such that $v_i \ne v_j$ for $i \ne j$.  
\end{definition}
Note that $P_k$ is more traditionally defined as the path with $k$ vertices, but since in saturation we deal mostly with the number of edges, our definition is more suitable.

\begin{definition}
A \emph{star} $S_k$ is the complete bipartite graph $K_{1, k}$.  In other
words, $S_k$ has one central vertex and $k$ leaves
connected to this vertex.
\end{definition}

\begin{definition}
A \emph{generalized star} $GS_{k,m}$ consists of one central vertex and
$k$ disjoint paths of length $m$ emanating from this vertex.  We call each
disjoint path a \emph{leg} of $GS_{k, m}$.
\end{definition}

\begin{definition}
A \emph{caterpillar $S_{m_1 \times m_2 \times \cdots \times m_k}$} is the tree in which $k$ vertices with degree greater than $1$ form a central path, and in which these vertices have degree $m_1, m_2, \dotsc, m_k$ in the order they appear on the central path.
\end{definition}

Note that caterpillars are more traditionally defined as trees in which each vertex is either on or adjacent to some central path; our definition is more useful for our needs because it allows us to exactly specify the degree of each vertex along the central path.

Our next two definitions are for convenience, as they are frequently referenced in proofs.
\begin{definition}
The \emph{weight} of a vertex $v \in V(Q_n)$, denoted by $w(v)$, is the number of $1$'s in the binary representation of $v$.
\end{definition}
\begin{definition}
The \emph{Cartesian product} $G \ \Box \ H$ is the graph created by placing copies of $G$ at all of the vertices of $H$ and connecting corresponding vertices of adjacent $G$'s in $H$.  
\end{definition}

\noindent We specifically use the fact that $Q_{n} = Q_{k} \ \Box \ Q_{n-k}$ to create some of our saturated subgraphs.  

Our final definition is a special subset of the vertices of the hypercube that we use in Section $\ref{sec:treeshamming}$ as a building block for the construction of saturated subgraphs.  Note that our definition is very similar to that in \cite{JohnsonPinto}.

\begin{definition}
Given $n = 2^{i} - 1$, let $H$ be the $i$ by $n$ matrix whose columns are all nonzero vectors in $\mathbb{F}_2^{i}$.  The \emph{Hamming code}
in $Q_n$ is the nullspace of $H$ over $\mathbb{F}_2$.
\end{definition}
\noindent A Hamming code $C$ in $Q_n$ satisfies the following properties:
\begin{enumerate}
\item{$|C| = \frac{2^n}{n+1}$.}
\item{The distance between all pairs of vertices in $C$ is at least $3$.}
\item{$C$ dominates $Q_n$. Precisely, every vertex in $V(Q_n) \setminus C$ is adjacent to exactly one vertex in $C$.}
\end{enumerate}

It is also helpful sometimes to consider a dominating set of $Q_n$ consist of disjoint subcubes $Q_j$ for $j < n$. A set $C$ of vertices of $Q_n$ is called a\emph{perfect dominating set} of $Q_n$ if every vertex is either in $C$ or is adjacent to exactly one vertex in $C$. Note that the Hamming code in $Q_{2^{i}-1}$ is an example of such a perfect dominating set.  Weichsel \cite{Weichsel} found a similar dominating set of disjoint subcubes $Q_j$:

\begin{theorem}[\cite{Weichsel}]
\label{thm:hamming_cube}
Let $n = 2^t - 1 + m$ with $0 \leq m < 2^t$ and $r = n - (2^s - 1)$ for some fixed $s$, $0 \leq s \leq t$. Then there is a perfect dominating set $C$ of the cube $Q_n$ such that the induced subgraph of $Q_n$ on $C$ is the disjoint union of $\frac{2^{n - r}}{n - r + 1}$ copies of $Q_r$. 
\end{theorem}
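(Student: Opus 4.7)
The plan is to leverage the Hamming code construction in a lower-dimensional cube and extend it through the product structure of $Q_n$. The key observation is that the choice $r = n - (2^s-1)$ gives $n-r = 2^s-1$, which is exactly of the form $2^i-1$, so the Hamming code $C_0$ in $Q_{n-r}$ is available with its three listed properties: $|C_0| = 2^{n-r}/(n-r+1)$, pairwise distance at least $3$, and the property that every non-codeword is adjacent to exactly one codeword. Using the decomposition $Q_n = Q_r \,\Box\, Q_{n-r}$, I would define
\[
C := V(Q_r) \times C_0 \subseteq V(Q_n),
\]
and verify the two required properties in turn.

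For the perfect dominating property, I would take an arbitrary vertex $(x,y) \in V(Q_n)$. If $y \in C_0$ then $(x,y) \in C$ and there is nothing to check. Otherwise, $y$ has a unique $C_0$-neighbor $y' \in V(Q_{n-r})$, so $(x,y') \in C$ is a neighbor of $(x,y)$ in $Q_n$. The remaining $Q_n$-neighbors of $(x,y)$ are either of the form $(x',y)$ with $x' \sim_{Q_r} x$, which lies outside $C$ because $y \notin C_0$, or of the form $(x,y'')$ with $y'' \sim_{Q_{n-r}} y$ and $y'' \ne y'$, which lies outside $C$ because $y'$ is the unique $C_0$-neighbor of $y$. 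Hence $(x,y)$ has exactly one neighbor in $C$.

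For the structure of the induced subgraph $Q_n[C]$, I would suppose $(x,y),(x',y') \in C$ are adjacent in $Q_n$ and observe that they differ in exactly one coordinate. If that coordinate lay in the $Q_{n-r}$ factor, then $y, y' \in C_0$ would satisfy $d_{Q_{n-r}}(y,y')=1$, contradicting the distance-$3$ property of the Hamming code. So the differing coordinate lies in the $Q_r$ factor, meaning $y = y'$ and $x \sim_{Q_r} x'$. It follows that $Q_n[C]$ decomposes as the disjoint union, over $y \in C_0$, of the induced subcubes $V(Q_r)\times\{y\} \cong Q_r$, producing exactly $|C_0| = 2^{n-r}/(n-r+1)$ copies of $Q_r$, as claimed. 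The argument is essentially a verification, so there is no serious obstacle; the substantive content is recognizing that the dimension constraint $n-r = 2^s-1$ is precisely what makes a Hamming code available in the smaller factor, and that its distance-$3$ property is what forces the induced components to be disjoint copies of $Q_r$ rather than something larger.
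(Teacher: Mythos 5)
Your proof is correct. Note that the paper itself gives no proof of this statement --- it is imported directly from Weichsel's paper as a cited result --- so there is no internal argument to compare against; your construction $C = V(Q_r) \times C_0$ with $C_0$ the Hamming code in $Q_{n-r} = Q_{2^s-1}$ is exactly the natural way to realize it, and both verifications (unique domination via the unique $C_0$-neighbor in the second factor, and the decomposition of $Q_n[C]$ into $|C_0| = 2^{n-r}/(n-r+1)$ copies of $Q_r$ via the distance-$3$ property) are sound, including the degenerate case $s=0$ where $C = V(Q_n)$.
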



\section{Outline of Results}
$\label{sec:outline}$
In this section, we state our major results, accompanied often by a sketch of the proof or the main idea behind the proof.  The first such result is a general lower bound on $\sat(Q_n, G)$.  To derive this bound, we use an argument based on the minimum degree of
$G$, finding a lower bound that is best for graphs with large minimum degree of non-leaves. Before stating the result, it is necessary
to first define $\emin(G)$ as the minimum value, over all pairs of adjacent vertices in $G$, of the maximum degree of two adjacent vertices.
\begin{theorem}
\label{thm:lowerbound}
Given a graph $G$ with $\emin(G)=\delta$, $\sat(Q_n, G) \ge (\delta - 1 + o(1)) \cdot 2^{n-2}$.  
\end{theorem}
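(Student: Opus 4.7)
The plan is to exploit the saturation condition to force most vertices of $Q_n$ to have relatively large degree in $H'$. Let $H'$ be a $(Q_n, G)$-saturated subgraph and take any $xy \in E(Q_n) \setminus E(H')$. Adding $xy$ must create a copy of $G$ in $H' + xy$, and this copy must actually use $xy$, mapping it to some edge $uv$ of $G$. Comparing degrees gives $\max(\deg_{H'}(x), \deg_{H'}(y)) + 1 \ge \max(\deg_G(u), \deg_G(v)) \ge \emin(G) = \delta$. Call a vertex $v \in V(Q_n)$ \emph{heavy} if $\deg_{H'}(v) \ge \delta - 1$ and \emph{light} otherwise; the key consequence is that every $Q_n$-non-edge of $H'$ has at least one heavy endpoint. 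In particular, every $Q_n$-edge between two light vertices must lie in $E(H')$, so for any light vertex $v$ the number of its $Q_n$-neighbors inside the set $L$ of light vertices is at most $\deg_{H'}(v) \le \delta - 2$. Thus $Q_n[L]$ has maximum degree at most $\delta - 2$.

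The second step is a double-counting argument bounding $|L|$. Summing $Q_n$-degrees over $L$, $n|L| = 2 e_{Q_n}(L) + e_{Q_n}(L, V(Q_n) \setminus L) \le (\delta - 2)|L| + n(2^n - |L|)$, which rearranges to $|L| \le \frac{n \cdot 2^n}{2n - \delta + 2} = (1 + o(1)) \cdot 2^{n-1}$ as $n \to \infty$ with $\delta$ held fixed. Consequently the set $H$ of heavy vertices satisfies $|H| = 2^n - |L| \ge (1 - o(1)) \cdot 2^{n-1}$.

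The conclusion follows by the handshake lemma: $|E(H')| \ge \tfrac{1}{2} \sum_{v \in H} \deg_{H'}(v) \ge \tfrac{\delta - 1}{2} |H| \ge (\delta - 1 + o(1)) \cdot 2^{n-2}$. The only step requiring genuine care is the size bound on $L$, but the simple edge-boundary count above is already asymptotically sharp (it is tight for $\delta = 2$, where $L$ may be a maximum independent set), and no deeper isoperimetric input is required because $\delta$ is a constant while $n \to \infty$. The argument only ever uses that adding a non-edge completes $G$; the $G$-freeness hypothesis plays no role, which is consistent with this being only a lower bound on $\sat(Q_n, G)$.
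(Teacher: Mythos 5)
Your proof is correct and takes essentially the same route as the paper: the identical key observation that every non-edge of $Q_n$ must have an endpoint of degree at least $\delta-1$ in the saturated graph, followed by a double count showing that the low-degree (``light'') vertices number at most $(1+o(1))2^{n-1}$ and a handshake bound over the remaining heavy vertices; the paper merely organizes the count as an incidence count between light vertices and their heavy non-neighbours rather than as your edge-boundary bound on $L$, which yields the same asymptotics. One small caveat: your closing remark that $G$-freeness plays no role is not quite accurate, since you invoke it precisely to guarantee that the copy of $G$ in $H'+xy$ actually uses the added edge $xy$.
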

The main idea behind this proof is that, for every non-edge in $Q_n$, one of the endpoints must have degree at least $\delta -1$ in our subgraph.  This idea, along with a few tweaks, yields the desired bound.  It is important to note here that this result only uses the fact that $Q_n$ is $n$-regular, so actually applies not only in $Q_n$ but in any $n$-regular host graph.

Our next result is a general lower bound on the saturation number of  trees.  Let, for a tree $T$, the \emph{cubical dimension} of $T$, denoted by $\cd(T)$, be the smallest positive integer for which $T$ can be embedded in $Q_{\cd(T)}$.  Then we have the following result:
\begin{theorem}
\label{thm:supahimportant}
Let 
\[k := \min\limits_{e \in T}\ \{ \max(\cd(T_1), \cd(T_2))\},\]
where $T_1$ and $T_2$ are the two connected components of $T\setminus e$.  Then, if $k<\cd(T)$, there exists a
$T$-saturated subgraph of $Q_n$ with $k \cdot 2^{n-1}$ edges.
\end{theorem}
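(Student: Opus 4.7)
The plan is to exploit the Cartesian product decomposition $Q_n = Q_k \ \Box \ Q_{n-k}$. Choose an edge $e = \{r_1, r_2\} \in E(T)$ achieving the minimum in the definition of $k$, and let $T_1, T_2$ be the components of $T \setminus e$ containing $r_1, r_2$ respectively; since $\cd(T_i) \le k$, fix embeddings $\psi_i \colon T_i \hookrightarrow Q_k$. I would take $H^\prime$ to be the spanning subgraph of $Q_n$ consisting of all edges internal to the $2^{n-k}$ disjoint $Q_k$-copies indexed by vertices of $Q_{n-k}$; equivalently, delete every edge whose endpoints differ in one of the last $n-k$ coordinates. The edge count is then immediate: $2^{n-k}$ copies of $Q_k$, each contributing $k \cdot 2^{k-1}$ edges, for a total of $k \cdot 2^{n-1}$.

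For $T$-freeness: since $T$ is connected, any copy of $T$ in $H^\prime$ must lie inside a single $Q_k$-component, but $\cd(T) > k$ by hypothesis, so no such copy exists. For saturation, let $e^\prime = \{u, v\} \in E(Q_n) \setminus E(H^\prime)$. By construction $u$ and $v$ lie in two distinct $Q_k$-copies $Q^u, Q^v$ whose indices are adjacent in $Q_{n-k}$. Using vertex-transitivity of $Q_k$, I would precompose $\psi_1$ with an automorphism of $Q^u$ to obtain an embedding of $T_1$ into $Q^u$ sending $r_1 \mapsto u$, and similarly embed $T_2$ into $Q^v$ sending $r_2 \mapsto v$. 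Because $Q^u$ and $Q^v$ are vertex-disjoint, the two images overlap nowhere, and gluing them with $e^\prime$ produces a copy of $T$ in $H^\prime + e^\prime$.

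The construction is essentially forced by the product structure, so there is no deep obstacle; the only care required is in the saturation step, where one must record both that $\{u, v\}$ necessarily spans two \emph{adjacent} $Q_k$-copies (forced by the edge structure of $Q_n$) and that vertex-transitivity of $Q_k$ suffices to place $r_1$ and $r_2$ at any prescribed pair of endpoints. The hypothesis $k < \cd(T)$ is used only in the $T$-freeness argument, and exactly there.
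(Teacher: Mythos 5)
Your proposal is correct and follows essentially the same route as the paper: decompose $Q_n$ as $Q_k \ \Box \ Q_{n-k}$, keep only the edges inside the $2^{n-k}$ copies of $Q_k$, use $\cd(T) > k$ for $T$-freeness, and use vertex-transitivity of $Q_k$ (the paper's ``rotational symmetry'') to root $T_1$ and $T_2$ at the endpoints of any added cross-edge. The only difference is presentational — you make explicit that the two endpoints of a non-edge lie in adjacent $Q_k$-copies and that the images are disjoint, details the paper leaves implicit.
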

The main idea of this proof is that we can place $T_1$ and $T_2$ in disjoint $k$-dimensional subcubes and, because of our condition on the cubical dimension, not have a copy of $T$.  It is easy to see from there that any added edge creates a copy of $T$, which gives us our upper bound.  

This idea of \emph{disjoint subcubes} is our first major idea for constructing $T$-saturated subgraphs.  Since removing any edge from trees $T$ creates two distinct connected components, they are particularly vulnerable to this kind of attack.  

An especially interesting case of such trees is the path $P_k$, for which a bound can quickly be found using Theorem $\ref{thm:supahimportant}$.  We, by altering the simple construction that solely takes edges in disjoint subcubes, were able to improve this bound, obtaining the following result.
\begin{theorem}
\label{thm:satpaths}
Let $k$ be an integer greater than 4 and $i = \lfloor \log_2(k-1)\rfloor$. Then,
$$
\sat(Q_n, P_k) \leq \begin{cases}  (i+1) \frac{k-1}{2^{i+1}} 2^{n-1} &\textrm{if $k$ is odd} \\ 
\left(\frac{i(k-2)}{2^{i+1}} + 1 \right)2^{n-1} & \mbox{if $k$ is even}.
\end{cases}
$$
\end{theorem}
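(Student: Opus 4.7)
The plan is to construct an explicit $P_k$-saturated subgraph $G \subseteq Q_n$ achieving the stated edge count. The construction uses the product decomposition $Q_n = Q_{i+1} \ \Box \ Q_{n-i-1}$, viewing $Q_n$ as $2^{n-i-1}$ vertex-disjoint copies of $Q_{i+1}$ (``fibers''), and placing an isomorphic copy of a carefully chosen ``local'' subgraph $H \subseteq Q_{i+1}$ in each fiber. For $k$ odd, $G$ is exactly the disjoint union of these fiber copies of $H$, with no cross-fiber edges; for $k$ even, an additional perfect matching of $Q_n$ in one distinguished coordinate direction is included. I would design $|E(H)| = (i+1)(k-1)/2$ for $k$ odd and $|E(H)| = i(k-2)/2$ for $k$ even, so that the total edge counts match the claimed bounds.

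I treat the $k$ odd case first and set $m = (k-1)/2$. The subgraph $H$ should be chosen so that (a) $H$ is $P_k$-free; (b) from every vertex $x \in V(Q_{i+1})$ there is a path in $H$ starting at $x$ of length at least $m$; and (c) for every non-edge $uv$ of $H$ within $Q_{i+1}$, the graph $H + uv$ already contains a $P_k$. A natural candidate takes $m$ edges in each of the $i+1$ coordinate directions of $Q_{i+1}$, arranged as vertex-disjoint ``diagonal'' paths that together span all $2^{i+1}$ vertices (a pair of induced length-three monotonic paths in $Q_3$ works for the case $k = 5$, as can be verified directly). The induced structure ensures that every intra-fiber non-edge either lies across different components (allowing a long concatenation) or fails to exist altogether as a $Q_{i+1}$-edge (no chords inside a single component), which together give (c).

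Given such an $H$, the verification is structural. Since no cross-fiber edge is in $G$, every path in $G$ lives inside a single fiber and has length less than $k$ by (a), so $G$ is $P_k$-free. For saturation, any non-edge $e$ of $G$ is either a within-fiber non-edge (handled by (c)) or a cross-fiber edge $e = \{(x,y),(x,y')\}$ with $y \sim y'$ in $Q_{n-i-1}$; in the latter case (b) provides paths of length at least $m$ in each fiber copy of $H$ starting from $(x,y)$ and $(x,y')$, and their concatenation through $e$ yields a path of length at least $2m + 1 = k$, hence a $P_k$, because these paths lie in disjoint vertex sets. Counting edges gives $|E(G)| = 2^{n-i-1} \cdot (i+1)m = \frac{(i+1)(k-1)}{2^{i+1}} \cdot 2^{n-1}$, as required.

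For $k$ even, the parity obstruction is that $2m + 1$ with $m = (k-1)/2$ is not an integer; I would instead take $m' = (k-2)/2$ and build $H$ on only $i$ of the $i+1$ coordinate directions of $Q_{i+1}$, contributing $i \, m'$ edges per fiber, and add a perfect matching of $Q_n$ in the remaining direction (another $2^{n-1}$ edges). The matching attaches one extra edge at every vertex, so the ``halves plus crossing'' concatenation for cross-fiber non-edges now yields a path of length at least $(m' + 1) + 1 + (m' + 1) = 2m' + 3 \geq k$, restoring saturation; the edge total becomes $\left(\frac{i(k-2)}{2^{i+1}} + 1\right) 2^{n-1}$. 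The hard part will be designing $H$ inside $Q_{i+1}$ so that (a), (b), and (c) hold simultaneously: (b) pushes $H$ to contain long paths, (a) forbids them from reaching length $k$, and (c) demands that every intra-fiber non-edge nonetheless completes a $P_k$; the balance is most delicate in the ``tight'' range where $k - 1$ is close to $2^{i+1}$.
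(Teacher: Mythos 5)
Your high-level architecture (decompose $Q_n = Q_{i+1}\,\Box\,Q_{n-i-1}$, put a local gadget $H$ with the right edge count in each fiber, handle cross-fiber non-edges by concatenation) is the same shape as the paper's argument, and your edge counts match the claimed bounds. But the entire content of the theorem lives in the existence and verification of $H$, and that is exactly what is missing. Worse, the one concrete candidate you offer for the odd case cannot exist: a vertex-disjoint union of paths spanning all $2^{i+1}$ vertices of $Q_{i+1}$ has at most $2^{i+1}-1$ edges, whereas your $H$ is required to have $(i+1)m = (i+1)(k-1)/2 \ge (i+1)2^{i-1}$ edges, which already exceeds $2^{i+1}-1$ at $k=7$ (nine edges versus at most seven in $Q_3$) and for all $k\ge 7$ odd thereafter. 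So the ``diagonal paths'' picture is combinatorially infeasible outside the single case $k=5$, and no replacement satisfying your (a), (b), (c) is exhibited.

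The paper's gadget is structurally quite different and sidesteps your condition (b) entirely. For $k$ odd it takes $H$ to be $Q_{i+1}$ with a carefully chosen set $C$ of $j = 2^{i-1}-\tfrac{r-1}{2}$ \emph{even vertices deleted} (Lemma \ref{lem:maxpath}), so that the longest path is exactly $k-1$ and every \emph{odd} vertex starts a $P_{k-1}$ (Lemma \ref{lem:maxpath2}); within-fiber non-edges are precisely the edges into $C$, each incident to an odd vertex, so they complete a $P_k$ inside one fiber. Note this $H$ has isolated vertices, so your requirement that \emph{every} vertex start a length-$m$ path fails for it, and your plain disjoint-union-of-identical-copies construction would not be saturated at cross-edges meeting a deleted vertex. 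The paper repairs this not by concatenating two half-length paths but by Lemma \ref{lem:endpoints}: alternate $H$ with a parity-translated copy between adjacent fibers so that every cross-fiber edge meets an odd vertex (an endpoint) of one of the two copies. The even case is likewise not a ``matching in the leftover direction'' but a two-layer construction in $Q_{i+2}$ using $C$ and a shifted copy $C'$, with selected cross edges and a parity argument to rule out $P_k$. In short: your plan correctly identifies the required specifications and the arithmetic, but the construction that meets them --- the actual proof --- is absent, and the sketched candidate provably cannot meet them.
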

The main idea in the construction is that, by deleting a set number of vertices of the same parity from $Q_k$, we can construct, with a few tweaks, a $P_{2^{k-1} + j}$-saturated graph for $1 \le j \le 2^{k-1}$.  

In a similar manner, we were able to tweak the construction of Theorem $\ref{thm:supahimportant}$ in the case of generalized stars.  Defining $P_j(Q_k)$ as the maximum length of $j$ vertex-disjoint paths all emanating from some vertex $v$ in $Q_k$, we found the following:
\begin{theorem}
\label{thm:genstarthm}
Let $m$ and $k$ be positive integers and let $m^{\prime} = \left \lfloor \log_{2}(m-1) \right \rfloor$ and $j =\left \lceil \log_{2}(m - 2^{m^{\prime}})\right \rceil$. Then, if $m \le P_{k-1}(Q_{k-1})$, $\sat(Q_n, GS_{k, m}) \le (k+1+m^{\prime} + \frac{j}{2^{m^{\prime}}} )\cdot 2^{n-2}$. 
\end{theorem}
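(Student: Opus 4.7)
The plan is to construct an explicit $GS_{k,m}$-saturated subgraph of $Q_n$ whose edge count matches the stated bound, extending the disjoint-subcube philosophy of Theorem \ref{thm:supahimportant} to accommodate leg lengths $m$ that are not powers of two. The decisive combinatorial input is the hypothesis $m \le P_{k-1}(Q_{k-1})$, which guarantees that a $GS_{k-1,m}$ can be embedded, centered at a prescribed vertex, inside each $Q_{k-1}$-slice of the hypercube.

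I would first decompose $Q_n$ into disjoint subcubes of dimension $d = k-1+m'$ via the Cartesian product $Q_n = Q_d \,\Box\, Q_{n-d}$, chosen just large enough to host a $GS_{k-1,m}$ together with a partial $k$-th leg of length up to $2^{m'}$. Inside each subcube, I would place a copy of $GS_{k-1,m}$ centered at a designated vertex $v_0$ (possible by the hypothesis), and attach a ``stub'' for the $k$-th leg, arranged so that adding any missing edge either extends the stub to a full leg of length $m$ (when the added edge lies within the subcube) or attaches a fresh leg to $v_0$ (when the added edge crosses a subcube boundary). To cover the residual leg length $m - 2^{m'}$ when $m$ is not a power of two, I would augment the construction with a sparser layer of ``bridge'' edges distributed at rate $j$ per block of size $2^{m'}$; this yields exactly the fractional term $j/2^{m'}$ in the bound.

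Finally, I would verify saturation in two steps. For $GS_{k,m}$-freeness, the missing-leg structure of each subcube combined with the disjointness of the decomposition prevents any full star from appearing. For saturation, I would case-analyze missing edges by whether they lie inside a subcube or cross a boundary, checking in each case that the edge completes a $GS_{k,m}$ at $v_0$ (or at a center in an adjacent subcube). The main obstacle is the boundary case: showing that every cross-subcube edge genuinely produces a $GS_{k,m}$ requires carefully orienting the legs of each $GS_{k-1,m}$ and positioning the bridge edges so that every coordinate direction of $Q_n$ is covered by a completable partial leg. The hypothesis $m \le P_{k-1}(Q_{k-1})$ is precisely what makes this alignment feasible, as it gives enough freedom within each $Q_{k-1}$-slice to orient the $k-1$ existing legs along designated axes. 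Counting contributions (roughly $k-1$ legs of length $m$ inside each subcube, plus the $k$-th-leg stub of length $m'$, plus the bridge edges contributing $j/2^{m'}$ per vertex on average) yields exactly $(k+1+m'+j/2^{m'}) \cdot 2^{n-2}$.
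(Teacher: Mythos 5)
Your construction has a fatal density problem. A saturated graph must have the property that adding \emph{any} missing edge of $Q_n$ creates a copy of $GS_{k,m}$, but placing a single copy of $GS_{k-1,m}$ (plus a stub and a few bridge edges) in each subcube of dimension $d=k-1+m'$ leaves all but $O(km)$ of the $2^{d}$ vertices of each block isolated. An edge joining two isolated vertices cannot create a copy of $GS_{k,m}$ for $k\ge 2$, since both of its endpoints end up with degree $1$ while every edge of $GS_{k,m}$ has an endpoint of degree at least $2$; so the graph is nowhere near saturated, and your case analysis only treats the few non-edges that happen to touch $v_0$ or the stub. The edge count is likewise off: $k-1$ legs of length $m$ contribute $(k-1)m$ edges per $2^{k-1+m'}$ vertices, i.e.\ $O\bigl(km\,2^{\,n-k-m'}\bigr)$ edges in total, which is smaller than the claimed $(k+1+m'+j/2^{m'})\cdot 2^{n-2}$ by a factor exponential in $k$; the final ``counting contributions'' step cannot yield the stated bound.

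The paper's construction is dense where yours is sparse. It works inside $Q_k$, split into two $(k-1)$-subcubes $A$ and $B$: it takes \emph{all} edges of $A$ (the source of the dominant $(k-1)2^{n-2}$ term), tiles $B$ with full $m'$-subcubes $b_i$ (giving $m'2^{n-2}$), threads paths through $j$-dimensional subcubes of the cube formed by one representative vertex from each $b_i$ (giving the $j/2^{m'}$ term), and finally completes greedily with at most $2^{k-1}$ edges between $A$ and $B$ via Lemma~\ref{lem:greedy} (giving the remaining $2\cdot 2^{n-2}$). The hypothesis $m\le P_{k-1}(Q_{k-1})$ is used, as you correctly intuit, to fit $k-1$ legs inside $A$; but in the paper this serves to show that freeness hinges only on the forced final leg into $B$ having length at most $m-1$, not to embed a lone tree. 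To salvage your approach you would need every vertex to have degree comparable to $k$, which essentially forces you back to including whole subcubes rather than single embedded copies of the star.
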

The construction here is specific to the case at hand (generalized stars), so we refer the reader to its proof; the important idea, regardless, is that beginning with a construction using disjoint subcubes and then altering it to improve the bound is a powerful tool.

The second important tool that we explore and implement is the Hamming code, which, amplified from $Q_{2^{i}-1}$ (in which it is perfect) to $Q_{2^{i} - 1 + j}$, allows us to construct perfect dominating sets of $Q_{j}$.  This is extremely useful because, after filling these dominating sets, we can then construct $(r-1)$-regular subgraphs of the remaining vertices, where $r$ is the minimum degree of the tree in question.  This, with a few tweaks, allows us to construct saturated subgraphs (and thus find upper bounds) for many trees.  

Before stating our general results obtained from using constructions based on the Hamming code, we first present an example of how the Hamming code is used in saturation-type problems by deriving an upper bound on the caterpillar $S_{k \times r}$.  Before we state this result, however, we need to prove an important lemma.

\begin{lemma}
\label{lem:bipartitehall}
For every $k$-regular bipartite graph $H$, there exists some subgraph
$G$ of $H$ that is $r$-regular and bipartite for all nonnegative integers $r \le k$.  
\end{lemma}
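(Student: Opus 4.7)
The plan is to prove, by induction on $k$, the stronger claim that every $k$-regular bipartite graph $H$ admits a decomposition of its edge set into $k$ edge-disjoint perfect matchings $M_1, M_2, \ldots, M_k$. Once this is established, taking $G = M_1 \cup M_2 \cup \cdots \cup M_r$ for any $0 \le r \le k$ produces the desired $r$-regular bipartite spanning subgraph, since each $M_i$ contributes exactly one edge at every vertex.

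The crux of the induction is a single step: every $k$-regular bipartite graph with $k \ge 1$ contains a perfect matching. To see this, let $H$ have bipartition $(A, B)$. Double-counting the edges gives $k|A| = |E(H)| = k|B|$, so $|A| = |B|$. For any $S \subseteq A$, the $k|S|$ edges incident to $S$ all have their other endpoint in $N(S)$, and each vertex of $N(S)$ is incident to at most $k$ of them, hence $k|N(S)| \ge k|S|$ and Hall's condition $|N(S)| \ge |S|$ holds. Hall's marriage theorem then supplies a perfect matching $M \subseteq E(H)$.

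Removing $M$ from $H$ leaves a $(k-1)$-regular bipartite graph, to which the induction hypothesis applies to furnish matchings $M_2, \ldots, M_k$; together with $M_1 := M$ these complete the required edge decomposition. The base case $k = 0$ is vacuous.

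I do not expect a genuine obstacle: the whole argument is a clean peel-off reduction, and the only nontrivial ingredient is verifying Hall's condition via the regularity degree count. The only thing worth being careful about is emphasizing that the existence of $G$ is claimed separately for each $r$ (no single $G$ could be $r$-regular simultaneously for several distinct values of $r$), which is exactly what the matching decomposition delivers.
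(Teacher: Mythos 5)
Your proof is correct and takes essentially the same approach as the paper: both peel off perfect matchings via Hall's theorem from a regular bipartite graph. You phrase it as a full decomposition into $k$ matchings and take a union of $r$ of them, while the paper simply removes $k-r$ matchings and keeps what is left, but this is the same argument; your explicit verification of Hall's condition is a welcome detail the paper omits.
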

\begin{proof}
Note first that any subgraph $G$ of a bipartite graph is necessarily
bipartite, so we only need to find an $r$-regular subgraph.  To do this, we
invoke Hall's Theorem \cite{hall}.  By a simple application of this theorem, we can
find a perfect matching within our $k$-regular subgraph.  Removing all
edges in this perfect matching, we are left with a $(k-1)$-regular
bipartite subgraph.  We can repeat this process $k-r$ times, and thereby
end up with an $r$-regular, bipartite graph, as desired.
\end{proof}
\begin{theorem}
\label{thm:doublestar}
For all positive integers $k, r$ where $k \ge r$, $\sat(Q_n, S_{k \times r}) \le r \cdot 2^{n-1}$.
\end{theorem}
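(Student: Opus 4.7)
The plan is to exhibit a $(Q_n, S_{k \times r})$-saturated subgraph $H'$ of $Q_n$ with at most $r \cdot 2^{n-1}$ edges, using the Hamming-code framework outlined in Section~\ref{sec:outline}: I will build $H'$ by filling a perfect dominating set (obtained via Theorem~\ref{thm:hamming_cube}) whose induced subgraph is a disjoint union of subcubes, and then gluing on a regular bipartite subgraph on the remaining vertices (via Lemma~\ref{lem:bipartitehall}).

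Explicitly, apply Theorem~\ref{thm:hamming_cube} to obtain a perfect dominating set $C \subseteq V(Q_n)$ whose induced subgraph is a disjoint union of copies of $Q_s$, where the parameter $s$ is chosen large enough to guarantee $S_{k \times r}$-freeness within each component (typically $s = r$ suffices when $k \ge r \ge 3$) but small enough to control the edge count. Include in $H'$ all edges of $Q_n[C]$, making each vertex of $C$ have degree $s$. Since each outside vertex has exactly one $C$-neighbor, $Q_n[V(Q_n) \setminus C]$ is $(n-1)$-regular and bipartite; applying Lemma~\ref{lem:bipartitehall} I extract an $(r-1)$-regular bipartite subgraph, which I append to $H'$. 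Using $|C| = 2^n/(n-s+1)$, a direct computation shows $|E(H')| \le r \cdot 2^{n-1}$ for the allowed range of $s$.

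For the $S_{k \times r}$-free direction, the only vertices of degree $\ge r$ in $H'$ lie in $C$, so any embedded copy of $S_{k \times r}$ would have its central path contained in a single $Q_s$-component of $C$. I argue $Q_s$ does not contain $S_{k \times r}$: when $s = r$, every central-path vertex has exactly the right number of non-path $Q_s$-neighbors to supply its required leaves, so all non-path neighbors are forced to serve as leaves; but any two distance-$2$ path vertices share a second common neighbor besides the intermediate path vertex, which would then be a leaf of both at once, a contradiction.

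Finally, for saturation, consider any $uv \in E(Q_n) \setminus E(H')$. I construct a copy of $S_{k \times r}$ in $H' \cup \{uv\}$ whose central path passes through $uv$; the increased degree of $u$ and $v$ after the addition provides precisely the leaf-assignment flexibility that was unavailable inside a pure $Q_s$, enabling me to dodge the distance-$2$ leaf conflict, and leaves near $v$ can additionally be drawn from the outside $(r-1)$-regular subgraph. The chief obstacle is ensuring this argument succeeds for every $k \ge r$ and every location of $uv$: for large $k$ the $u$-side of the central path may require a $Q_s$-component with more non-path room than $Q_r$ provides, so $s$ may need to be taken slightly larger than $r$ (chosen so that $Q_s$ is still $S_{k \times r}$-free while admitting a long enough central-path extension from $u$). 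The borderline $r = 2$ case, which forces $k = 2$, is handled separately by a direct star-cover construction of $Q_n$.
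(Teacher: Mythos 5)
There is a genuine gap, and it lies in the choice of which edges go into $H'$. You include only the edges \emph{inside} the dominating set $C$ (so each vertex of $C$ has degree $s \approx r$) together with an $(r-1)$-regular graph on $V(Q_n)\setminus C$, and you omit all edges between $C$ and its complement. Consequently no vertex of $H'$ ever has degree close to $k$ when $k > r$. A non-edge $uv$ with both endpoints outside $C$ (and these are most of the non-edges) raises $\deg u$ and $\deg v$ only to $r$, so $H' \cup \{uv\}$ still has no vertex of degree $k$ and hence no copy of $S_{k\times r}$: the graph is not saturated, and no amount of "leaf-assignment flexibility" can fix a degree deficit. Patching by enlarging $s$ does not help either: $Q_s$ contains $S_{k\times r}$ as soon as $s \ge k$ (adjacent vertices of the hypercube have disjoint neighbourhoods, so two adjacent vertices of degree $s\ge k$ immediately give the double star), so you cannot push $s$ high enough to create degree-$k$ vertices while keeping the components $S_{k\times r}$-free. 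The paper's construction is different in exactly this respect: it takes the classical perfect Hamming code $C$ in $Q_\kappa$ for $\kappa = 2^j-1 > k$ (so $C$ is an independent set, not a union of positive-dimensional subcubes) and adds \emph{all $\kappa$ edges incident to each codeword}, making every codeword have full degree $\kappa > k$, and then uses Lemma~\ref{lem:bipartitehall} to bring the outside vertices up to degree $r-1$. Freeness holds because the only vertices of degree $\ge r$ are the pairwise non-adjacent codewords, and saturation holds because every outside vertex already neighbours a degree-$\kappa$ codeword, so any added edge bumps it to degree $r$ and completes $S_{\kappa\times r} \supseteq S_{k\times r}$. The cross edges you discarded are precisely what makes saturation work.

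Two smaller problems: your $S_{k\times r}$-freeness argument for $Q_s$ is misdirected, since the spine of $S_{k\times r}$ consists of just two \emph{adjacent} vertices, so there are no "distance-$2$ path vertices" to produce the claimed conflict; the correct statement is simply that $Q_s$ is $S_{k\times r}$-free if and only if $s \le k-1$, which in particular falsifies your claim that $s=r$ suffices in the boundary case $k=r$. Also, applying Theorem~\ref{thm:hamming_cube} directly in $Q_n$ requires $n$ to have a special form; the paper avoids this by building the saturated graph in $Q_\kappa$ and tiling $Q_n = Q_\kappa \,\Box\, Q_{n-\kappa}$ with copies of it, checking separately that edges between copies (in particular between two codewords) also create the forbidden caterpillar.
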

\begin{proof}
Let $\kappa$ be the smallest integer greater than $k$ of the form $2^{j}
- 1$.  Now, consider a Hamming code $C$ on
$Q_{\kappa}$, which is perfect by the definition of $\kappa$.  Using this Hamming code, we will construct a
$S_{\kappa \times r}$-saturated graph $H$ on $Q_{\kappa}$.  Begin by adding to $H$ all incident edges to $C$.  This creates $\frac{2^{\kappa}}{\kappa + 1}$ vertices with
degree $\kappa$.  Now, note that the induced subgraph $H^{\prime}$ of $H$ with vertex set $V(Q_n) \ \backslash \ C$ is $1$-regular and bipartite, as it is a subgraph of $Q_n$.  From here, our preconditions satisfied, we use Lemma
$\ref{lem:bipartitehall}$ in reverse to add perfect matchings to $H^{\prime}$ until it is $(r-1)$-regular.  Adding this to $H$, we have a subgraph in which
$\frac{2^{\kappa}}{\kappa + 1}$ vertices have degree $\kappa$ and
$\frac{\kappa \cdot 2^{\kappa}}{\kappa + 1}$ vertices have degree $r-1$. 

 From these properties, it is easy to
see that $H$ is $S_{\kappa \times r}$-saturated, as any non-edge must be
incident to some vertex with degree $r-1$, which in turn is always adjacent to some vertex in $C$ (because $C$ is a Hamming code) with degree $\kappa > k$, thereby creating $S_{\kappa \times
  r}$ , and $H$ does not originally contain $S_{\kappa \times
  r}$, as there are no two adjacent vertices with degree $r$ or greater.  Note, importantly, that $H$ also does not contain $S_{k \times r}$, as $k \ge r$.

To scale $H$ up to $Q_n$, we simply need to consider the
subgraph of $Q_{\kappa} \ \Box \ Q_{n- \kappa}$ in which each vertex (a $Q_k$) of
$Q_{n-\kappa}$ contains $H$, and there are no edges between $Q_{\kappa}$'s.  In this subgraph of $Q_n$, all non-edges must be incident to some vertex with degree $r-1$, except those incident to two vertices in Hamming codes. However, since any edge between degree $\kappa$ vertices also creates $S_{\kappa \times r}$, our subgraph remains saturated, giving us an upper bound of 
\[2^{n-\kappa} \cdot \left(\kappa \cdot \frac{2^{\kappa}}{\kappa + 1} + (r-2) \cdot
\frac{\kappa \cdot 2^{\kappa-1}}{\kappa + 1}\right) \le r
\cdot 2^{n-1}. \qedhere\]
\end{proof}

The ideas present in this proof can then be extended further to obtain the following general results on caterpillars.
\begin{theorem}
\label{thm:generalcat}
Given a caterpillar $S_{k_1 \times k_2 \times \cdots \times k_m}$, let $\emin\{k_1, k_2, \ldots, k_m\} = (k_j, k_{j+1})$.  Given that $m = 2^a + b$ for some integer $2\le b \le 2^a$, then, if $b - 1 < j < 2^{a}$ and $\max\{k_j, k_{j+1}\} \ge \left \lfloor \log_{2} m \right \rfloor$, $\sat(Q_n, S_{k_1 \times k_2 \times \cdots \times k_m}) \le \max\{k_j, k_{j+1}\}\cdot 2^{n-1}$.
\end{theorem}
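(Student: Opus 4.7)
The plan is to generalize the construction in the proof of Theorem \ref{thm:doublestar} by replacing the Hamming code with a perfect dominating set whose components are copies of $Q_a$, supplied by Theorem \ref{thm:hamming_cube}. Set $d = \max\{k_j, k_{j+1}\}$ and choose $\kappa$ of the form $a + 2^s - 1$ with $s$ large enough that $\kappa \ge d - 1$ and $\kappa \ge \max_i k_i$. By Theorem \ref{thm:hamming_cube}, $Q_\kappa$ then admits a perfect dominating set $C$ that is a disjoint union of copies of $Q_a$. Build a subgraph $H \subseteq Q_\kappa$ consisting of (i) every edge of $Q_\kappa$ internal to a $Q_a$-component of $C$, (ii) every edge of $Q_\kappa$ from $C$ to $V(Q_\kappa) \setminus C$, and (iii) a $(d - 2)$-regular bipartite subgraph of the induced subgraph of $Q_\kappa$ on $V(Q_\kappa) \setminus C$, obtained by invoking Lemma \ref{lem:bipartitehall}. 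Under this construction, every vertex of $C$ has degree $\kappa$ in $H$ and every vertex of $V(Q_\kappa) \setminus C$ has degree $d - 1$.

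Next I would verify that $H$ contains no copy of $S_{k_1 \times \cdots \times k_m}$. By the definition of $\emin$, every spine edge has max-degree at least $d$, and so must have an endpoint in $C$ since $V(Q_\kappa)\setminus C$-vertices have degree only $d - 1$. A vertex of $V(Q_\kappa)\setminus C$ has only one neighbor in $C$, hence cannot serve as an interior spine vertex (which would require two $C$-spine-neighbors). So the $m - 2$ interior spine vertices all lie in $C$, and since $H$ has no edges between distinct $Q_a$-components, they lie in a single $Q_a$. Depending on whether $0$, $1$, or $2$ of the two spine endpoints also sit outside $C$, this forces a path of $m$, $m - 1$, or $m - 2$ vertices in a single $Q_a$. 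The first two require $m - 1 \le 2^a$, impossible since $m = 2^a + b$ with $b \ge 2$; only the third can occur, and only in the boundary case $b = 2$. In that boundary case the hypotheses $j \ge b$ and $d \ge \lfloor \log_2 m \rfloor$ are used to block the embedding: both spine endpoints would then need $k_1, k_m \le d - 1$, which, combined with the forced position of the bottleneck strictly in the interior of the spine and a leaf-count at each outside endpoint, produces a contradiction.

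For saturation, the only non-edges of $H$ inside $Q_\kappa$ lie in $V(Q_\kappa) \setminus C$. Given such a non-edge $uv$, place $u, v$ at spine positions $j, j + 1$, take the unique $C$-neighbors $u', v'$ as spine positions $j - 1, j + 2$, and extend the spine on either side along a path in the respective $Q_a$-component. The range $b - 1 < j < 2^a$ guarantees $j - 1 \le 2^a - 2$ and $m - j - 1 \le 2^a - 1$, so both extensions fit. Leaves at each spine vertex are picked from its surplus neighbors in $V(Q_\kappa) \setminus C$; by perfect domination these leaf sets are automatically disjoint across different $C$-spine-vertices, and $d \ge a$ provides enough slack. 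Scaling up by placing a copy of $H$ in each of the $2^{n - \kappa}$ copies of $Q_\kappa$ inside $Q_n = Q_\kappa \Box Q_{n - \kappa}$, with no edges between copies, and handling the remaining inter-copy non-edges (which join two $C$-vertices of the same $Q_\kappa$-coordinate) exactly as in Theorem \ref{thm:doublestar}, the total edge count becomes
\[
|E| \;=\; 2^{n-1}\bigl[d + (a - d)\,2^{-s}\bigr] \;\le\; d \cdot 2^{n-1},
\]
using $d \ge a$. The main obstacle is the boundary case $b = 2$ in the non-containment step, where both spine endpoints can sit outside $C$ and the spine interior can take the form of a Hamiltonian path in a single $Q_a$; the quantitative hypotheses on $j$ and on $d$ are exactly what is needed to close that case.
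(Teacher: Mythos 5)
Your construction and overall strategy match the paper's (a perfect dominating set of disjoint $Q_a$'s from Theorem \ref{thm:hamming_cube}, all edges incident to it, and a regular bipartite completion of the remaining vertices via Lemma \ref{lem:bipartitehall}), but there is a genuine gap in the saturation step. Your argument for a non-edge $uv$ silently assumes that the unique dominating neighbors $u'$ and $v'$ lie in \emph{different} $Q_a$-components, so that the two spine extensions can be built in separate subcubes. Nothing in your construction guarantees this. For a vertex $u \notin C$ dominated by a component $K$, flipping any of the $a$ coordinates internal to $K$ produces a neighbor of $u$ that is also dominated by $K$; an arbitrary $(d-2)$-regular subgraph supplied by Hall's theorem will in general omit some of these $a$ edges, leaving non-edges $uv$ with $u', v' \in K$. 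Adding such an edge forces essentially all $m-2$ interior spine vertices into the single component $K$, which has only $2^a = m - b < m-2$ vertices once $b \ge 3$ (and at best $m-4$ of them fit even if both spine endpoints leave $C$, which still fails for $b \ge 5$). So your $H$ is not saturated. The paper closes exactly this hole by insisting that the matchings be chosen so that \emph{every} pair of adjacent vertices dominated by the same $Q_a$ is joined by an edge of $H$, and observes that this consumes $a$ of the $d-1$ available degree at each such vertex --- which is precisely where the hypothesis $\max\{k_j,k_{j+1}\} \ge \lfloor \log_2 m \rfloor = a$ is actually needed. In your write-up that hypothesis is only invoked vaguely (``enough slack'' for leaves and in the edge count), which is a sign the key use of it is missing.

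A secondary issue: in the freeness step you correctly isolate $b = 2$ as the dangerous boundary case (spine interior a Hamiltonian path of one $Q_a$, both spine endpoints outside $C$), but you then assert that the hypotheses ``produce a contradiction'' without exhibiting one. Since $k_1, k_m \le d-1$ is not excluded by $\emin\{k_1,\dots,k_m\} = (k_j,k_{j+1})$ with $b-1 < j < 2^a$, this case needs an actual argument (about the positions of the degree-$d$ spine vertices or the availability of disjoint leaf sets), not just a pointer to the hypotheses. As written, both the non-containment claim (at $b=2$) and the saturation claim (for same-component non-edges) are open, so the proposal does not yet establish the bound.
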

\begin{theorem}
\label{thm:othergeneralcat}
Consider $S_{k_1 \times \cdots \times k_m}$ for $m = 2^{a}+1$.  If $\emin\{k_1, \ldots, k_m\} = (k_i, k_{i+1})$ for $i = 1$ or $i = m-1$ and $\max\{k_i, k_{i+1}\} \ge a$, then $\sat(Q_n, S_{k_1 \times \cdots \times k_m}) \le \max\{k_i, k_{i+1}\} \cdot 2^{n-1}$.
\end{theorem}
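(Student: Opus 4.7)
The plan is to adapt the Hamming-code style construction of Theorem~\ref{thm:doublestar} to a longer spine by replacing singletons with $Q_a$-blocks via Theorem~\ref{thm:hamming_cube}. First a preliminary observation: because $\emin = (k_i, k_{i+1})$ is realized by a spine edge and not a leaf edge, we must have $k_i = k_{i+1} = r := \max(k_i, k_{i+1}) = \min_j k_j$, so every spine degree is at least $r$. I take $i = 1$; the $i = m-1$ case is symmetric. Fix $s$ so that $\kappa := a + 2^s - 1$ exceeds $\max_j k_j$ and apply Theorem~\ref{thm:hamming_cube} to obtain a perfect dominating set $C$ of $Q_\kappa$ that is a disjoint union of copies of $Q_a$.

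Define $H \subseteq Q_\kappa$ by (i) including every edge inside each $Q_a$-block of $C$, (ii) including every edge between $C$ and $V(Q_\kappa) \setminus C$, and (iii) invoking Lemma~\ref{lem:bipartitehall} to add an $(r-2)$-regular bipartite subgraph on $V(Q_\kappa) \setminus C$. Every $C$-vertex then has degree $\kappa$ and every $V(Q_\kappa) \setminus C$-vertex has degree $r-1$. Counting edges and then applying the standard $Q_n = Q_\kappa \ \Box \ Q_{n-\kappa}$ blow-up yields total edge count
\[
\frac{a + r(\kappa - a)}{\kappa - a + 1}\cdot 2^{n-1} \le r \cdot 2^{n-1},
\]
where the inequality uses precisely the hypothesis $r \ge a$. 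Freeness of $H$ is immediate: every spine degree is at least $r$ but every $V(Q_\kappa) \setminus C$-vertex has degree only $r-1$, so the whole spine lies in $C$; since distinct $Q_a$-blocks of $C$ are non-adjacent in $Q_\kappa$, the spine must sit inside a single $Q_a$, which has only $2^a = m-1$ vertices --- a contradiction.

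For saturation, any non-edge of $H$ has both endpoints $u, v \in V(Q_\kappa) \setminus C$. I place $u$ at spine position $1$, $v$ at spine position $2$, take the unique $C$-neighbor $v'$ of $v$ as spine $3$, and route spine $3$ through $m$ along a path of length $m-3$ inside $v'$'s $Q_a$-block, avoiding $u'$ (the unique $C$-neighbor of $u$) whenever $u'$ happens to share that block --- which is possible because $Q_a$ is Hamiltonian-laceable, supplying a Hamiltonian $v' \leftrightarrow u'$ path whose final edge we discard. In either case $u'$ remains off the spine and is available as a leaf of $u$.

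The main technical obstacle I anticipate is the leaf assignment. At spine $1$ and $2$, leaves come for free: $u, v$ share no common neighbor in $Q_\kappa$ (they differ in one coordinate), so their $r-2$ bipartite neighbors form disjoint pools, and $u'$ supplies the missing $(r-1)$th leaf of $u$. At spine $3$ through $m$, each $v_j$ needs $k_j - 2$ further leaves (or $k_m - 1$ at the end), drawn from $v_j$'s $\kappa - a$ private neighbors in $V(Q_\kappa) \setminus C$ (disjoint across $j$ by perfect domination) together with its $Q_a$-internal neighbors not already used on the spine. A Hall-style matching on the $Q_a$-internal side, together with the slack $\kappa > \max_j k_j$ baked into the choice of $s$, closes this assignment and produces the desired copy of $S_{k_1 \times \cdots \times k_m}$.
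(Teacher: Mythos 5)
The paper never actually prints a proof of this theorem --- it states only that the argument is ``similar'' to that of Theorem \ref{thm:generalcat} --- and your construction is exactly that argument specialized to $m = 2^a+1$: a Weichsel dominating set of $Q_a$-blocks (Theorem \ref{thm:hamming_cube}), all edges inside and incident to the blocks, an $(r-2)$-regular graph on the complement via Lemma \ref{lem:bipartitehall}, and a spine routed through $u$, $v$, and a single block for saturation. Your edge count $\frac{a + r(\kappa-a)}{\kappa-a+1}\cdot 2^{n-1}$ is correct, you correctly identify the hypothesis $r \ge a$ as exactly what makes it at most $r\cdot 2^{n-1}$, and your handling of the case where $c(u)$ and $c(v)$ lie in the same block via Hamiltonian-laceability (they automatically have opposite parities) is cleaner than the paper's device of pre-adding edges between outside vertices dominated by a common block.

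The one load-bearing step I would push back on is your ``preliminary observation.'' You read $\emin\{k_1,\dots,k_m\} = (k_i,k_{i+1})$ as saying that the graph invariant $\emin$ --- minimized over \emph{all} adjacent pairs, including spine--leaf pairs, which contribute $\max\{k_j,1\} = k_j$ --- is attained at a spine edge, whence $k_i = k_{i+1} = r = \min_j k_j$ and every spine degree is at least $r$. The paper's own usage (see the remark following the proof of Theorem \ref{thm:generalcat}) suggests that $\emin\{k_1,\dots,k_m\}$ ranges over spine pairs only; under that reading your deduction fails, and so does your freeness argument: with $m = 2^a+1$ the spine interior has only $2^a-1$ vertices and fits inside a single $Q_a$-block, so both spine endpoints may sit outside $C$ with degree $r-1$. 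Concretely, for $a=2$ and degree sequence $(2,3,3,3,2)$ the minimizing spine pair is at $i=1$ with $r = 3 \ge a$, yet your $H$ visibly contains the caterpillar (two endpoints outside $C$, the three middle vertices a path inside one block). So either the theorem is read with your stronger hypothesis --- in which case your freeness argument is clean, arguably cleaner than the paper's --- or the freeness step needs a genuinely new idea. Separately, the ``standard blow-up'' to $Q_n$ also introduces non-edges joining two $C$-vertices in adjacent copies of $Q_\kappa$; saturation there requires the (easy, but unstated) observation that a spine of $2^a+1$ vertices can be threaded through the two corresponding $Q_a$-blocks across the new edge.
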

Interestingly enough, we need not stop there.  Perfect dominating sets can also be used to classify what we call \emph{very generalized stars}.  In particular, very generalized stars are generalized stars in which the vertices along each of the legs are themselves central nodes of stars.  Given the very generalized star $VGS_{k, m}$ and legs $1, 2, \ldots, k$, we denote the degree of each of the vertices on the legs by $k_{ij}$, where $i$ is the leg number and $j$ is the position of the vertex on that leg.  From this definition, we were able to use perfect dominating sets to find the following result:
\begin{theorem}
\label{thm:vgsup}
Let $r = \min\{\max\{k_{i1}, k_{i2}\} :  1 \le i \le k\}$.  Then, given that there exists a pair of $k_{i1}, k_{j1} \ge r$ for $i \ne j$, that $m \le P_{k-1}(Q_{k-1})$, and that $r \ge k$, $\sat(Q_n, VGS_{k, m}) \le r \cdot 2^{n-1}$. 
\end{theorem}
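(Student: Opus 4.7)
The plan is to adapt the Hamming-code construction used for Theorem \ref{thm:doublestar}. I would take $\kappa = 2^i - 1$ to be the smallest integer of this form with $\kappa \ge r$ (which, since $r \ge k$, also gives $\kappa \ge k$), let $C$ be the Hamming code in $Q_\kappa$, and build a subgraph $H \subseteq Q_\kappa$ in two stages: (i) add every edge of $Q_\kappa$ incident to $C$, so that each $c \in C$ has degree $\kappa$ and each non-$C$ vertex has a unique $C$-neighbor; (ii) apply Lemma \ref{lem:bipartitehall} to the $(\kappa-1)$-regular bipartite graph $Q_\kappa[V(Q_\kappa) \setminus C]$ to extract an $(r-2)$-regular bipartite subgraph, and add it to $H$ so that every non-$C$ vertex has total degree $r-1$. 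I would then lift $H$ to $Q_n = Q_\kappa \ \Box \ Q_{n-\kappa}$ via independent copies with no cross-fiber edges; the same edge-count arithmetic as in Theorem \ref{thm:doublestar} bounds the result by $r \cdot 2^{n-1}$.

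Next I would verify that the lifted graph $H^{\ast}$ is $VGS_{k,m}$-free. Any embedded copy lies in a single fiber, since there are no cross-fiber edges. By hypothesis, two legs satisfy $k_{i1}, k_{j1} \ge r$, so the center $v^{\ast}$ must have two neighbors of degree at least $r$ in $H^{\ast}$. But in $H^{\ast}$ only vertices of $C$ have degree $\ge r$ --- they have degree $\kappa$, while non-$C$ vertices have degree $r-1$ --- and any two distinct vertices of $C$ are at distance at least $3$. Hence $v^{\ast}$ cannot have two $C$-neighbors, and no such embedding exists.

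For the saturation direction, I would take any non-edge $uv$ of $Q_n$; both endpoints must lie in non-$C$ positions within their respective fibers, since all edges incident to $C$ were added and there are no cross-fiber edges. Adding $uv$ raises the degrees of $u$ and $v$ to $r$. I would place a $VGS_{k,m}$ centered at $u$: its two heavy first-level neighbors are $c_u \in C$ (degree $\kappa$) and $v$ (degree $r$), accommodating the heavy legs with $k_{i1}, k_{j1} \ge r$; the remaining $k-2$ legs start at non-$C$ neighbors $w$ of $u$ (of degree $r-1$), and the defining condition $\max(k_{\ell 1}, k_{\ell 2}) \ge r$ forces the heavy vertex of each such leg to be the second one, placed at $w$'s unique $C$-neighbor $c_w$ (of degree $\kappa$). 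The hypothesis $m \le P_{k-1}(Q_{k-1})$ then guarantees that all $k$ legs can be realized as vertex-disjoint paths of length $m$ inside the appropriate subcube of $Q_\kappa$, each satisfying its star-petal degree requirements.

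The main obstacle will be the combinatorial routing in the saturation step: each leg has a prescribed degree sequence $k_{i1}, k_{i2}, \dotsc, k_{im}$, and the chosen paths must land each star petal at a distinct vertex of $H^{\ast}$ of the correct degree. The distance-$3$ property of the Hamming code prevents different legs' Hamming-code anchors from colliding, and the bound $m \le P_{k-1}(Q_{k-1})$ controls the geometry, but fitting these ingredients together rigorously --- including the case where $u$ and $v$ lie in different fibers --- is the technical heart of the proof.
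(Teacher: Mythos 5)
Your construction diverges from the paper's in a way that breaks the saturation direction. The paper does not use the plain Hamming code for this theorem: it uses a $\kappa$-construction with $i = k-1$, that is, Weichsel's perfect dominating set $S$ consisting of disjoint copies of $Q_{k-1}$ (Theorem \ref{thm:hamming_cube}), together with all edges inside and incident to $S$ and additional edges joining non-$S$ vertices adjacent to the same block. The blocks are essential. After a non-edge $uv$ is added, the paper roots the center of $VGS_{k,m}$ at a vertex \emph{of $S$} adjacent to $u$, places $k-1$ entire legs inside that copy of $Q_{k-1}$ --- this is exactly what the hypothesis $m \le P_{k-1}(Q_{k-1})$ is for: $k-1$ vertex-disjoint paths of length $m$ living in a block all of whose vertices have degree $\kappa$ --- and routes only the single leg attaining $\max\{k_{i1},k_{i2}\} = r$ through $u$ and $v$. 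In your construction the center $u$ has exactly two neighbors of degree at least $r$, namely $c_u$ and $v$; but the hypotheses only guarantee that \emph{at least} two legs satisfy $k_{\ell 1} \ge r$ --- all $k$ of them may. For example, $k=4$, $m=2$, $k_{\ell 1}=6$ and $k_{\ell 2}=4$ for every leg gives $r=6 \ge k$ and satisfies every hypothesis, yet your center cannot supply four neighbors of degree at least $6$. The same obstruction recurs deeper in the legs: the theorem places no bound on $k_{\ell j}$ for $j \ge 3$, and your legs are forced to alternate between $C$-vertices and degree-$(r-1)$ vertices, so they cannot realize large prescribed degrees at even depths. Your appeal to $m \le P_{k-1}(Q_{k-1})$ does not repair this, because there is no $Q_{k-1}$ block anywhere in your construction for those paths to live in.

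A second, independent error: your claim that every non-edge of the lifted graph has both endpoints outside $C$ is false. Only the within-fiber edges incident to $C$ were added, so every cross-fiber edge at a $C$-vertex (including $C$-to-$C$ edges) is a non-edge of $H^{\ast}$, and for such a non-edge your argument yields a prospective center with at most one heavy neighbor, so no copy of $VGS_{k,m}$ is created. The paper's block structure handles this case by placing $k-1$ legs in one $Q_{k-1}$ and sending the last leg across the new edge into the other block. (Your $VGS_{k,m}$-freeness argument via the distance-$3$ property of the Hamming code is correct as far as it goes and is a neat alternative to the paper's freeness argument, but it cannot rescue the construction given the failures above.)
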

We were also able to find a generalized version of this result concerning the class of trees $\mathcal{A}$ that satisfy the following property: given some central vertex and the subgraph $P$ of $A \in \mathcal{A}$ containing only vertices at most distance two from this central vertex and all edges incident to these vertices, all trees $a \in \mathcal{A}$ contain a leg (denoted by $\mathcal{L}$) beginning with a path of length two whose vertices $v_1, v_2$ satisfy $\emin(P) = \max\{\deg(v_1), \deg(v_2)\}$.  For this general class of trees, we get a similar result using essentially the same construction:
\begin{theorem}
\label{thm:asymmetricgeneral}
Let $\mathcal{A}'$ be the tree created by removing $\mathcal{L}$ from $\mathcal{A}$.  Then, given that there exists some two vertices adjacent to the central vertex with degree greater than $r$, that $\cd(\mathcal{A}'), \cd(\mathcal{A} - \mathcal{A}') \le k-1$, and that $r \ge \cd(\mathcal{A}') + 1$, we have that $\sat(Q_n, \mathcal{A}) \le r \cdot 2^{n-1}$.
\end{theorem}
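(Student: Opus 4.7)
The plan is to mimic the construction from the proof of Theorem \ref{thm:vgsup}, replacing the Hamming code with the dominating family of disjoint $(k-1)$-cubes supplied by Weichsel's theorem (Theorem \ref{thm:hamming_cube}). First, I would choose $\kappa$ so that Theorem \ref{thm:hamming_cube} produces a perfect dominating set $C\subseteq V(Q_\kappa)$ consisting of disjoint copies of $Q_{k-1}$. Since $\cd(\mathcal{A}')\le k-1$, I embed one copy of $\mathcal{A}'$ in each component of $C$ and include the resulting edges in my candidate subgraph $H$. I would then apply Lemma \ref{lem:bipartitehall} to the induced bipartite subgraph of $Q_\kappa$ on $V(Q_\kappa)\setminus C$ to add $r-2$ further perfect matchings, so that every vertex outside $C$ ends up with total degree exactly $r-1$ in $H$ (its one edge to $C$ together with $r-2$ matching edges).

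To verify $\mathcal{A}$-freeness, I would argue that any embedded copy of $\mathcal{A}$ must place its central vertex in $C$: the hypothesis provides two neighbors of the center with degree strictly greater than $r$, while every vertex outside $C$ has degree only $r-1$, so those two neighbors must lie in $C$; then, since perfect domination allows a non-$C$ vertex exactly one $C$-neighbor, the center itself must lie in $C$ as well. With the center pinned in $C$, the subtree $\mathcal{A}'$ of the copy is realized inside $C$, and the leg $\mathcal{L}$ cannot be completed in the exterior because its first edge satisfies $\max(\deg v_1,\deg v_2)=\emin(P)$, a value that the $(r-1)$-regular exterior cannot supply. Conversely, for any non-edge $uw$, after adding it some endpoint $u$ has degree $r$, and the $Q_{k-1}$ component of $C$ adjacent to $u$, which already contains a full embedded copy of $\mathcal{A}'$, together with this new edge and a leg continuation from $w$ through the exterior, assembles a copy of $\mathcal{A}$. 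The hypothesis $r\ge\cd(\mathcal{A}')+1$ enters precisely here, ensuring that the $(r-1)$-regular exterior is rich enough to accommodate whatever part of the extended leg must reach beyond $C$.

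Finally, the edge count proceeds as in the proof of Theorem \ref{thm:doublestar}: the embedded $\mathcal{A}'$-copies together with the $r-2$ exterior matchings total at most $r\cdot 2^{\kappa-1}$ edges in $Q_\kappa$, and the Cartesian product $Q_n=Q_\kappa\Box Q_{n-\kappa}$, with $H$ placed in each $Q_\kappa$-slice and no edges between slices, yields the desired $r\cdot 2^{n-1}$. The main obstacle will be the $\mathcal{A}$-freeness verification, since $\mathcal{A}$ is specified abstractly by the leg $\mathcal{L}$ with minimal $\emin(P)$ rather than as a concrete tree; one must argue carefully that no subtle embedding of $\mathcal{A}$ escapes detection, exploiting both the two high-degree center-neighbors (which pin the center into $C$) and the exact $(r-1)$-regular structure outside $C$ (which forbids the first edge of $\mathcal{L}$ from lying there).
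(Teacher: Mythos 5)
The paper does not write out a proof of Theorem~\ref{thm:asymmetricgeneral}; it only states that the argument is ``very similar'' to that of Theorem~\ref{thm:vgsup}, so the relevant comparison is against that template. Your proposal starts from the same skeleton (a Weichsel dominating set of disjoint $Q_{k-1}$'s, an $(r-1)$-regular exterior built with Lemma~\ref{lem:bipartitehall}, freeness from the two high-degree neighbours of the centre, saturation from the new edge playing the role of the first edge of $\mathcal{L}$), but it deviates in two places where the deviation actually breaks the argument. First, you put only a single embedded copy of $\mathcal{A}'$ into each $Q_{k-1}$ component of $C$, whereas the paper's construction takes \emph{all} edges within and incident to each component. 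Saturation requires a copy of $\mathcal{A}'$ whose central vertex sits at the particular $C$-neighbour $c$ of the endpoint $u$ of the added non-edge, and $c$ ranges over every vertex of its component as $u$ varies; one fixed embedded copy per component cannot serve all of them, while a full $Q_{k-1}$ (with vertex-transitivity and $\cd(\mathcal{A}')\le k-1$) can.

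Second, and more seriously, you route the continuation of the leg $\mathcal{L}$ beyond $v_2=w$ ``through the exterior.'' The exterior is $(r-1)$-regular by construction, and the portion of $\mathcal{L}$ past its first two vertices is an arbitrary subtree that may contain vertices of degree far exceeding $r-1$ (in the motivating $VGS$ case these are star centres of large degree); the exterior simply cannot host it. The intended argument sends this continuation into the full $Q_{k-1}$ component dominating $w$ --- which is why the hypothesis $\cd(\mathcal{A}-\mathcal{A}')\le k-1$ appears in the statement, a hypothesis your proof never invokes. Relatedly, you omit the step of adding edges between exterior vertices dominated by the same component; in the paper's construction this guarantees that every remaining non-edge joins vertices adjacent to \emph{different} components, so that the leg continuation lands in a component vertex-disjoint from the one hosting $\mathcal{A}'$. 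Without it, a non-edge between two exterior vertices dominated by the same $Q_{k-1}$ forces the leg to re-enter that component and collide with the already-used vertices. Your edge count and the final scaling via $Q_\kappa\,\Box\,Q_{n-\kappa}$ are fine, but the construction as described is not saturated.
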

As a whole, the plethora of general results giving upper bounds on the saturation number of trees, all based on the minimum degree of the forbidden graph in question, suggest that perfect binary codes and dominating sets may indeed allow us to find tight upper bounds for all trees.  It, along with the idea of constructing saturated subgraphs using disjoint subcubes, are our two most promising future directions.

Now, in the remaining sections, we present proofs of all of our results (some of which were not stated here).


\section{General Bounds and Methods}
\label{sec:genresults}
\vspace{-0.1 in}
In this section, we give proofs of our three general results that apply regardless of whether the forbidden graph is a tree.  The first of these is the previously described general lower bound on $\sat(Q_n, G)$.

\subsection{Proof of Theorem \ref{thm:lowerbound}}
\begin{proof}
Consider a subgraph $H$ of $Q_n$ that is $G$-saturated, and define $V_i$ as the number of vertices $h \in H$ with $\deg(h) = i$. For the sake of brevity, for $k \in \{0, \dotsc, n\}$, let $V_{\geq k}$ be the number of vertices of $H$ of degree at least $k$; in other words, $V_{\geq k} = V_k + \cdots + V_n$. Notice that the addition of any non-edge $uv$ of $Q_n$ must create a copy of $G$, and therefore at least one of $u$ and $v$ has a degree of at least $\delta-1$. 

Now, consider the number of pairs $(v, u)$ where $v$ is a vertex with degree less than $\delta - 1$ such that $uv \in E(Q_n) \ \backslash E(H)$. We know that the degree of $u$ must be at least $\delta - 1$. For each $v$ of degree $i < \delta-1$, there are exactly $(n-i)$ such $u$'s, so the number of pairs is 
\begin{equation}
\label{eq:vk}
\sum\limits_{i = 0}^{\delta-2}(n-i)V_i. 
\end{equation}
On the other hand, each vertex $u$ of degree at least $\delta-1$ is counted at most $n - (\delta -1)$
times in $(\ref{eq:vk})$. So
\begin{equation}
\label{summationinquestion2}
\sum\limits_{i = 0}^{\delta-2}(n-i)V_i \le
(n-\delta+1)V_{\geq \delta-1}.  
\end{equation} Since $\displaystyle\sum\limits_{i=0}^{n}iV_i = 2e(H)$, we have that
\begin{equation}
\label{eq:manipulation} 
\displaystyle\sum\limits_{i = 0}^{\delta-2}(n-i)V_i 
= n(2^n - V_{\geq \delta - 1}) - 2e(H) + \sum_{i=\delta-1}^n iV_i.
\end{equation}
Notice that the last term is at least $(\delta - 1)V_{\geq \delta-1}$, so, with $(\ref{summationinquestion2})$, we have that
\begin{equation}
\label{eq:inequality} 
n2^n \le 2e(H) + 2(n-\delta+1)V_{\geq \delta-1}.
\end{equation}
To bound $V_{\geq \delta-1}$, observe that
\begin{equation}
2e(H) \ge \displaystyle\sum\limits_{i=\delta-1}^{n} iV_i \ge (\delta-1)V_{\geq \delta-1}, \nonumber
\end{equation}
so $V_{\geq \delta-1} \le \frac{2e(H)}{\delta-1}$. Hence
\[n2^n \le \frac{4(n-\delta+1)}{\delta-1}e(H),\]
or simply,
\[e(H) \ge \frac{\delta-1}{2n-\delta+1}n2^{n-1} = (\delta-1 +
o(1))2^{n-2}, \]
as desired.
\end{proof}
\subsection{Other Preliminary Lemmata}
Our next two preliminary results are useful lemmata to be used later in the paper.  The first of these sets the foundation for some of our
inductive constructions in Section $\ref{sec:treesdisjoint}$.  In essence, it allows us to
classify the situations in which we can scale a saturated graph in $Q_{i}$
up to $Q_n$ while still maintaining saturation.  

Before stating this lemma, however, we first need to make an important definition.

\begin{definition}
Given a graph $H$, a vertex $v \in V(H)$ is an \emph{endpoint} with respect to a forbidden graph $G$ if the addition of an incident edge to $v$ creates a new copy of $G$. 
\end{definition}

\begin{lemma}
\label{lem:endpoints} 
Given a bipartite graph $G$, let $H$ be a $G$-saturated subgraph of $Q_k$ with $c \cdot e(Q_k)$ edges for some $c \leq 1$, and let $U \subseteq V(H)$ be the set of endpoints of $H$ with respect to $G$. Then, if there exists an automorphism of $Q_k$ which maps $V(H) \setminus U$ into $U$, $\sat(Q_n, G)\le c \cdot k 2^{n-1}$.
\end{lemma}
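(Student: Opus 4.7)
The plan is to scale $H$ up to a $G$-saturated subgraph of $Q_n$ via the decomposition $Q_n = Q_k \ \Box \ Q_{n-k}$. I would index the $2^{n-k}$ copies of $Q_k$ by the vertices $w$ of $Q_{n-k}$ and, depending on the parity of the weight of $w$, place inside each copy either $H$ (for even weight) or $\sigma^{-1}(H)$ (for odd weight), where $\sigma$ is the given automorphism; no $Q_{n-k}$-edges are included. A direct count gives $2^{n-k} \cdot c \cdot e(Q_k) = c \cdot k \cdot 2^{n-1}$ edges, matching the claimed bound.

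For $G$-freeness, since $G$ is connected and no inter-copy edges are present, any copy of $G$ must sit inside a single $Q_k$-copy, but each such copy is isomorphic to $H$ or $\sigma^{-1}(H)$, both $G$-free. For saturation, an intra-copy non-edge is completed by the saturation of $H$ itself, so the real work is the inter-copy case. Fix an inter-copy non-edge $e = \bigl((v, w_0), (v, w_1)\bigr)$ with $w_0, w_1$ adjacent in $Q_{n-k}$; these have opposite weight parity, so one side carries $H$ and the other $\sigma^{-1}(H)$. The hypothesis $\sigma(V(H) \setminus U) \subseteq U$ rearranges to $V(Q_k) = U \cup \sigma^{-1}(U)$, and $\sigma^{-1}(U)$ is exactly the endpoint set of $\sigma^{-1}(H)$, so $v$ is an endpoint in at least one of the two adjacent copies. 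By the definition of endpoint, that copy contains a near-copy of $G$ missing only a single edge incident to $v$, and the new edge $e$ supplies this edge by attaching a fresh neighbor, namely the vertex $(v, w_{1-i})$ sitting in the opposite copy, completing a copy of $G$.

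The main obstacle I expect is justifying that the endpoint argument transfers cleanly from intra-copy to inter-copy edges: the definition of endpoint is phrased in terms of incident edges in $Q_k$, whereas the completing edge $e$ goes between distinct copies of $Q_k$. For forbidden trees this is transparent, because deleting any leaf of a tree leaves a connected subtree that embeds inside a single $Q_k$-copy, and the leaf itself is a single vertex with no further structural constraints, so the vertex on the opposite side of $e$ can play the role of the leaf. Extending this to arbitrary bipartite $G$ would require $G$ to have a bridge matchable to $e$, an issue that simply does not arise in the tree setting, which is the focus of the paper.
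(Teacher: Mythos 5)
Your proof is correct and follows essentially the same route as the paper's: the same $Q_k \ \Box \ Q_{n-k}$ decomposition, with $H$ and its automorphic image placed in the copies of $Q_k$ according to the parity of the weight of the $Q_{n-k}$-coordinate, so that every cross-edge is incident to an endpoint in one of its two copies while each copy remains $G$-saturated internally. Your closing remark on why the endpoint property transfers from intra-copy non-edges to inter-copy ones flags a subtlety the paper's proof passes over silently, and your resolution (the new neighbour plays the role of a leaf of the forbidden tree) is the right reading for the applications in this paper.
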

\begin{proof}
Let $H_0$ be such a saturated subgraph of $Q_k$. By assumption, there is a isomorphic copy $H_1$ of $H_0$ in $Q_k$ whose set of endpoints with respect to $G$ contain all of the non-endpoints of $H_0$. If we place $H_0$ and $H_1$ in disjoint $Q_k$'s in $Q_{k+1}$, then any edge between the two $Q_k$'s must be incident to either an endpoint of $H_0$ or an endpoint of $H_1$.  Therefore, the addition of that edge creates a copy of $G$. Furthermore, $H_0$ and $H_1$ are both themselves $G$-saturated, so we have constructed a $G$-saturated subgraph of $Q_{k+1}$. Similarly, consider $Q_n$ as $Q_{k} \ \Box \ Q_{n-k}$. Note that each $Q_k$ can be written as $(Q_k, x)$ where $x \in \{0, 1\}^{n-k}$. To construct our saturated graph in $Q_n$, place $H_0$ in $(Q_k, x)$ if $w(x)$ is even and $H_1$ if $w(x)$ is odd. This graph is again $G$-saturated by the same logic, so we have that $\sat(Q_n, G) \le
e(H_0) \cdot 2^{n-k}= c \cdot k2^{n-1}$, as desired.
\end{proof}

Finally, the following lemma, a generalized version of an observation of Johnson and Pinto \cite{JohnsonPinto}, is useful in later constructions. We omit its proof because it is relatively straightforward.
\begin{lemma}
\label{lem:greedy}
Suppose that $H^{\prime}$ is a $G$-free subgraph of $Q_n$ and that $S$ is the set of edges in $E(Q_n) \ \backslash  \ E(H^{\prime})$ that do not create a copy of $G$ when added to $H^{\prime}$.  Then, we can form a $G$-saturated subgraph $H$ of $Q_n$ by adding no more than $e(S)$ edges to $H^{\prime}$.
\end{lemma}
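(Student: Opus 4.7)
The plan is to build $H$ from $H'$ by a straightforward greedy procedure: iteratively pick any edge in the current ``safe'' set (the edges whose addition does not yet create a copy of $G$), add it to the working graph, and update the safe set, continuing until the safe set becomes empty.

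Concretely, I would set $H_0 := H'$ and $S_0 := S$, and at each step $i$ with $S_i \ne \emptyset$ choose some $e_i \in S_i$, define $H_{i+1} := H_i + e_i$, and let $S_{i+1}$ be the subset of $S_i \setminus \{e_i\}$ consisting of those edges whose addition to $H_{i+1}$ still yields no copy of $G$. The bound on the number of added edges is then immediate: since $e_i \notin S_{i+1}$, we have $|S_{i+1}| \le |S_i| - 1$, so the procedure terminates after at most $e(S)$ iterations, and the terminal graph $H$ differs from $H'$ by at most $e(S)$ edges.

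It remains to verify that the terminal $H$ is $G$-saturated. By construction $H$ is $G$-free, since at each stage we add only a currently safe edge. Now take any non-edge $f$ of $H$. Either $f$ was already absent from $S_0$, in which case adding $f$ to $H'$ creates a copy of $G$, which still appears in $H + f$ since $H \supseteq H'$; or $f$ left the safe set at some step $j \ge 1$, meaning adding $f$ to $H_j$ creates a copy of $G$, which likewise persists in $H + f$ because enlarging the host graph from $H_j$ to $H$ cannot destroy an existing subgraph. Either way, adding $f$ to $H$ creates a copy of $G$, so $H$ is $G$-saturated.

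I do not anticipate any serious obstacle: the argument is a standard greedy termination combined with the monotonicity observation that adding edges never destroys an existing subgraph copy of $G$, only potentially creates new ones. The only point that warrants a sentence of care is making this monotonicity explicit when transferring the ``creates a copy of $G$'' property from an intermediate $H_j$ to the final $H$; everything else is bookkeeping.
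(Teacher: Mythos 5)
Your greedy argument is correct and is exactly the ``relatively straightforward'' proof the paper chose to omit: termination follows since the safe set loses at least one edge per step, and saturation follows from the monotonicity observation that a copy of $G$ created by adding $f$ to some intermediate graph $H_j$ persists when $f$ is added to the larger final graph $H$. The only nit is a harmless index slip (an edge dropped from $S_{j+1}$ is one whose addition to $H_{j+1}$, not $H_j$, creates a copy of $G$), which does not affect the argument.
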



\section{Bounds on $\sat(Q_n, T_k)$ using Disjoint Subcubes}
\label{sec:treesdisjoint}
In this section, we present proofs of our upper bounds on the saturation number of trees that use constructions based on disjoint subcubes.  We begin by examining bounds for general trees in Section $5.1$, and then derive tighter bounds for some special cases (paths and generalized stars) in Sections $5.2$ and $5.3$.  
\subsection{General Case}
Here, we prove Theorem $\ref{thm:supahimportant}$, our general upper bound on the saturation number of trees based on their cubical dimension.
\subsubsection{Proof of Theorem \ref{thm:supahimportant}}
\begin{proof}
Let $uv \in T$ be the edge which attains this minimum, and let $T_1$ and $T_2$ be the two connected components of $T\setminus uv$ where $T_1$ is rooted at $u$ and $T_2$ is rooted at $v$. Without loss of generality, let $\cd(T_1) = k$ and $\cd(T_2) = j$ for $j \le k$. Now, consider a subgraph of $Q_{k} \ \Box \ Q_{n-k}$. We claim that the subgraph consisting of only edges within each copy of $Q_k$ is saturated. Because $\cd(T)>k$, we see that this subgraph is $T$-free. Now, consider the addition of a non-edge $u'v'$ between two $Q_k$'s. Due to rotational symmetry, we can find an isomorphic copy of $T_1$ rooted at $u'$ in one $Q_k$ and an isomorphic copy of $T_2$ rooted at $v'$ in the other $Q_k$. This creates a copy of $T$, so our subgraph is $T$-saturated. All that remains is to enumerate the number of edges in our saturated subgraph, $2^{n-k} \cdot k \cdot 2^{k-1} = k \cdot 2^{n-1}$. This implies that $\sat(Q_n, T) \leq k \cdot 2^{n-1}$, as desired.
\end{proof}
\noindent Notice that the proof of Theorem $\ref{thm:supahimportant}$ also holds for any graph with a cut edge. That
is, if we define $B(G)$ as the set of edges in a graph such that if $e \in B(G)$ is removed, two disjoint connected components would be created, then the
same conclusion follows.   

\subsection{Paths}
We now prove our upper bound on the saturation number of paths in the hypercube. To begin, we present a simple upper bound using Theorem $\ref{thm:supahimportant}$.

\begin{proposition}
\label{prop:path}
$\sat(Q_n, P_k) \leq \lfloor \log_2 k \rfloor \cdot 2^{n-1}$.
\end{proposition}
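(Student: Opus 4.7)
The plan is to apply Theorem \ref{thm:supahimportant} with the middle edge of $P_k$ as the cut edge. Let $i = \lfloor \log_2 k \rfloor$, so $2^i \leq k \leq 2^{i+1} - 1$. Removing a central edge of $P_k$ splits it into two subpaths $P_a$ and $P_b$ with $a + b = k - 1$ and $\max(a,b) \leq \lceil (k-1)/2 \rceil$.

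The first key step is to verify the bound $\lceil (k-1)/2 \rceil \leq 2^i - 1$, which is a short case check on the parity of $k$ using $k \leq 2^{i+1} - 1$. Then I would invoke the standard fact that $Q_i$ admits a Hamiltonian path; since every prefix of such a path is itself a path in $Q_i$, any $P_m$ with $m \leq 2^i - 1$ embeds in $Q_i$. In particular, $\cd(P_a), \cd(P_b) \leq i$.

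Next I would verify the hypothesis $k < \cd(P_k)$ of Theorem \ref{thm:supahimportant} (here ``$k$'' means the quantity from the theorem statement, which in our setting is $i$). Since $P_k$ has $k + 1 \geq 2^i + 1$ vertices while $Q_i$ has only $2^i$, we immediately get $\cd(P_k) \geq i + 1 > i$, so the hypothesis holds. Applying Theorem \ref{thm:supahimportant} yields
\[
\sat(Q_n, P_k) \leq i \cdot 2^{n-1} = \lfloor \log_2 k \rfloor \cdot 2^{n-1},
\]
as required.

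There is really no serious obstacle here: the only ingredients are the standard Hamiltonicity of $Q_i$ and the arithmetic inequality $\lceil (k-1)/2 \rceil \leq 2^{\lfloor \log_2 k \rfloor} - 1$. The potentially delicate point to double-check is that we are using the correct edge to invoke Theorem \ref{thm:supahimportant}, so I would state this up front as ``let $e$ be the middle edge'' and then confirm both components satisfy the cubical-dimension inequality.
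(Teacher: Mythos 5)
Your proposal is correct and follows essentially the same route as the paper: split $P_k$ at its middle edge and feed the two halves into Theorem \ref{thm:supahimportant}. The only difference is that you justify the cubical-dimension facts explicitly (upper bound via a Hamiltonian path in $Q_i$, lower bound via counting vertices), whereas the paper simply asserts $\cd(P_m) = \lfloor \log_2 m \rfloor + 1$; your version is a slightly more careful write-up of the same argument.
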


\begin{proof}
Notice that $\cd(P_k)$ is $\lfloor \log_2 k \rfloor + 1$. If $k$ is odd, then $P_k$ without the $\frac{k+1}{2}$th edge is simply the graph consisting of two disjoint copies of $P_{\frac{k-1}{2}}$. Since $\cd(P_{\frac{k-1}{2}}) = \lfloor \log_2 k-1 \rfloor = \lfloor \log_2 k \rfloor$, by Theorem $\ref{thm:supahimportant}$ we have that $\sat(Q_n, P_k) \leq \lfloor \log_2 k \rfloor \cdot 2^{n-1}$. The case where $k$ is even is similar. 
\end{proof}

To improve this bound, we first need several lemmata on the maximum length of a path in $Q_k$ after the deletion of some set of vertices of the same parity.

\begin{lemma}
\label{lem:maxpath}
Let $k$ be an integer greater than 1 and $j$ be an integer such that $0 < j \le 2^{k-1}$. Then, for vertices $v_1, \dotsc, v_j \in V(Q_k)$ of the same parity, the maximum length of a path in $Q_k \setminus \{v_1, \dotsc, v_j\}$ is $2^{k} - 2j$. Moreover, let $a_{k-1}2^{k-2} + \dotsc + a_1 2^0$ be the binary representation of $j-1$, and let $C$ be the collection of even vertices that either have $0^{k-1-i}1$ as a prefix for some $i$ with $a_i = 1$ or are $0^k$. Then $|C| = j$ and there is a path of length $2^k - 2j$ in $Q_k \setminus C$. 
\end{lemma}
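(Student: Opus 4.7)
My plan is to prove the two halves of the lemma separately: the upper bound $\ell \le 2^k - 2j$ on the maximum path length (which holds for every choice of $v_1, \dots, v_j$), followed by the matching construction using the specific set $C$. For the upper bound I would exploit bipartiteness: the hypercube $Q_k$ is bipartite with color classes of size $2^{k-1}$, and removing $j$ vertices of one parity leaves $2^{k-1} - j$ vertices of that parity. A path of length $\ell$ alternates between the two classes and visits at least $\lfloor (\ell+1)/2 \rfloor$ vertices of each class, so bounding this quantity by $2^{k-1} - j$ immediately yields $\ell \le 2^k - 2j$.

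For the cardinality $|C| = j$: given the binary expansion $j - 1 = \sum_{i \ge 1} a_i 2^{i-1}$, for each $i \ge 1$ the set of vertices whose first $k - i$ coordinates are $0^{k-1-i}1$ forms a subcube of dimension $i$ in which exactly $2^{i-1}$ vertices have even weight (the prefix contributes a single $1$, so an even total weight requires an odd number of $1$s among the $i$ free coordinates). Summing these contributions over $i$ with $a_i = 1$ and adding the lone vertex $0^k$ gives $1 + (j-1) = j$.

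For the existence of a path of length $2^k - 2j$ in $Q_k \setminus C$, I would induct on $k$. The base case $k = 2$ is direct verification: for $j = 1$, $C = \{00\}$ admits the path $01, 11, 10$ of length $2$; for $j = 2$, $C = \{00, 11\}$ admits the trivial path of length $0$. For the inductive step, split $Q_k$ along the first coordinate into $A = 0Q_{k-1}$ and $B = 1Q_{k-1}$, and branch on $a_{k-1}$. If $a_{k-1} = 0$, equivalently $j \le 2^{k-2}$, then every element of $C$ has first coordinate $0$ (so $C \subseteq A$), and under the identification $A \cong Q_{k-1}$ the set $C$ is exactly the analogous set for the parameters $(j, k-1)$; the inductive hypothesis then supplies a path in $A \setminus C$ of length $2^{k-1} - 2j$, which I would extend by crossing the twin edge from one endpoint $0v'$ to $1v' \in B$ (which is not in $C$) and appending a Hamiltonian path of $B \cong Q_{k-1}$ starting at $1v'$, producing total length $(2^{k-1} - 2j) + 1 + (2^{k-1} - 1) = 2^k - 2j$. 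If $a_{k-1} = 1$, equivalently $j > 2^{k-2}$, set $j' = j - 2^{k-2} \in [1, 2^{k-2}]$; the restriction $C \cap A$ corresponds exactly to the analogous set for $(j', k-1)$, so the inductive hypothesis directly produces a path in $A \setminus C$ of length $2^{k-1} - 2j' = 2^k - 2j$, with no extension needed.

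The main obstacle is Case 1, which relies on the standard Gray-code fact that $Q_{k-1}$ has a Hamiltonian path starting at any prescribed vertex. This works cleanly because $a_{k-1} = 0$ leaves $B$ completely undisturbed by $C$. In Case 2 the subcube $B$ is badly disrupted (half of its vertices lie in $C$, and what remains is an independent set after removal), but fortunately $B$ is not needed at all: the inductive hypothesis already yields a long enough path inside $A$.
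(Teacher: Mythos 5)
Your proposal is correct and follows essentially the same route as the paper: the upper bound via the parity/bipartiteness count, the computation $|C| = 1 + \sum_i a_i 2^{i-1} = j$, and a path that chains Hamiltonian paths through the subcubes indexed by the zero bits of $j-1$. The only difference is presentational --- you organize the construction as an induction on $k$ splitting on the leading bit $a_{k-1}$, whereas the paper writes the same chained path down explicitly in one pass starting from $0^{k-1}1$.
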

\begin{proof}
Let $v_1, \dotsc, v_j$ be vertices of $Q_k$ of the same parity. Without loss of generality, we can assume that this parity is even. Let $u_0 u_1 \dotsc u_\ell$ be a path of length $\ell$ in $Q_k \setminus \{v_1, \dotsc, v_j\}$. Clearly, the parity of vertices on this path must be alternating in the order that they appear in the path. But we have only $2^{k-1} - j$ even vertices, so the number of vertices on the path is at most $(2^{k-1}-j)+(2^{k-1}-j+1) = 2^k - 2j+1$ where the maximum is attained only if $\ell$ is even and $u_0, u_2, \dotsc, u_{\ell}$ are odd vertices. Hence $\ell \leq 2^k - 2j$. 

For the second part, note that the number of even vertices with the prefix $0^{k-1-i}1$ is $2^{i-1}$. So $|C| = 1+\sum_{i=1}^{k-1} a_i 2^{i-1} = j$. To construct a path of length $2^k - 2j$, let $i_1 > i_2 > \dotsc > i_s$ be indices such that $a_{i_t} = 0$. Starting from $v_0 = 0^{k-1}1$, we can jump to $(0^{k-1-i_s}1, Q_{i_s})$ by flipping the $k-i_s$th coordinate of $v_0$ and then taking a hamiltonian path inside the cube $Q_{i_s}$. Let $v_1$ be the endpoint of the current path. Again, from here, we can jump to $(0^{k-1-i_{s-1}}1, Q_{i_{s-1}})$ by flipping $k-i_{s-1}$th coordinate of $v_1$. We proceed in this manner to use all of the vertices of the$Q_{i_t}$. The number of vertices on our path is $1+\sum_t 2^{i_t} = 1+\sum_{i} (1-a_i)2^i = 2^k - 2j + 1$, so we have a path of length $2^k - 2j$. 
\end{proof}

\begin{lemma}
\label{lem:maxpath2}
Let $C \subset \{0, 1\}^{k}$ as in the previous lemma. If $j \leq 2^{k-2}$ ($a_{k-1} = 0$), then for any odd vertex $v$ in $Q_k$ there is a path of length $2^k-2j$ in $Q_k \setminus C$ starting from $v$.
\end{lemma}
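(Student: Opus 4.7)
The plan is induction on $k$, exploiting the structural fact that the hypothesis $a_{k-1}=0$ forces every vertex of $C$ to begin with a $0$: either $v=0^k$, or $v$ has prefix $0^{k-1-i}1$ with $i\le k-2$, in which case the first coordinate is automatically $0$. Consequently $C \subset \{0\}\times Q_{k-1}$, and the opposite ``clean'' half $\{1\}\times Q_{k-1}$ is completely $C$-free. The base case $k=2$, $j=1$ is immediate: the two odd vertices $01,10$ of $Q_2\setminus\{00\}$ each extend through $11$ to a length-$2$ path.

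For the inductive step, write $v = bw$ with $b \in \{0,1\}$ and $w \in Q_{k-1}$. If $b=1$, then $w$ has even parity in $Q_{k-1}$; by Hamiltonian-laceability of $Q_{k-1}$ (valid for $k-1 \ge 2$) there is a Hamiltonian path from $w$ to the odd-parity vertex $0^{k-2}1$, which lifts to a path of length $2^{k-1}-1$ in $\{1\}\times Q_{k-1}$ from $v$ to $1\cdot 0^{k-2}1$. Traverse the edge to $0^{k-1}1$ (odd, hence not in $C$) and then append the length-$(2^{k-1}-2j)$ path produced by Lemma \ref{lem:maxpath} inside $\{0\}\times Q_{k-1}\setminus C$, which inherits exactly the $C$-structure of the previous lemma for $Q_{k-1}$ with the same $j$. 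The total length is $2^k-2j$, as required.

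If $b=0$, then $w$ has odd parity in $Q_{k-1}$ and $v \notin C$. Two subcases close cleanly. When $j \le 2^{k-3}$, the inductive hypothesis applied in $Q_{k-1}\setminus C'$ produces a path of length $2^{k-1}-2j$ starting at $0w$ and ending at some $0u'$ with $u'$ odd, which is then extended by crossing to $1u'$ and appending a Hamiltonian path of the clean half. At the other extreme $j = 2^{k-2}$, the set $C$ already exhausts every even vertex of $\{0\}\times Q_{k-1}$, and the target length collapses to $2^{k-1}$, attained simply by the single cross $0w \to 1w$ followed by a Hamiltonian path in $\{1\}\times Q_{k-1}$.

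The main obstacle is the intermediate regime $2^{k-3} < j < 2^{k-2}$ (i.e., $a_{k-2}=1$), where the induction in $Q_{k-1}$ is not yet available. My plan to bridge this gap is to strengthen the inductive hypothesis so that in addition to the starting vertex one may also prescribe the odd endpoint, in a Hamiltonian-laceability style. After crossing $0w \to 1w$, traversing a Hamiltonian path in $\{1\}\times Q_{k-1}$ to a target $1u$ with $u$ even in $Q_{k-1}$ chosen so that $0u \notin C$ (such a $u$ exists precisely because $j < 2^{k-2}$), and crossing back to $0u$, the strengthened hypothesis supplies a continuation of length $2^{k-1}-2j-1$ in $\{0\}\times Q_{k-1}\setminus C$ that avoids $v$. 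An equivalent formulation is to perform a local surgery on the canonical path of Lemma \ref{lem:maxpath} inside its largest visited subcube $Q_{i_1}$, using the flexibility of Hamiltonian paths there to shift the starting vertex away from $0^{k-1}1$ to the prescribed $v$. Establishing this endpoint-flexible enhancement is the key technical step on which the whole induction hinges; once it is in place, the case analysis above closes cleanly.
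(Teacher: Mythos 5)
Your induction has a genuine, and by your own admission unfilled, gap: the regime $2^{k-3} < j < 2^{k-2}$ with $v$ in the half $\{0\}\times Q_{k-1}$. There the inductive hypothesis for $Q_{k-1}$ is unavailable (it requires $a_{k-2}=0$), and the ``endpoint-flexible enhancement'' you propose to bridge it is not a small tweak: after crossing out, traversing the clean half, and crossing back to a prescribed even vertex $0u$, you need a path of length $2^{k-1}-2j-1$ inside $\{0\}\times Q_{k-1}\setminus (C\cup\{v\})$ starting at $0u$. A parity count shows such a path must use \emph{every} even vertex of that half not in $C$, so this is a maximum-length fault-tolerant path claim with a prescribed endpoint of the \emph{opposite} parity from the one in the lemma -- a statement at least as hard as the lemma itself, and not established by the ``local surgery'' sketch. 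Since the whole induction hinges on this step, the proof as written does not close. (The parts you do complete -- the $b=1$ case, $j\le 2^{k-3}$, and $j=2^{k-2}$ -- are correct, including the observation that $C$ restricted to $\{0\}\times Q_{k-1}$ is exactly the canonical set for $Q_{k-1}$ with the same $j$.)

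The paper avoids this entirely by not inducting. It decomposes $Q_k\setminus\{0^k\}$ by the position of the first $1$, into subcubes $(0^{k-1-i}1,Q_i)$; the set $C$ deletes \emph{all} even vertices of the subcubes with $a_i=1$ and \emph{none} from those with $a_i=0$. Hence the subcube containing $v$ is either entirely clean -- take a Hamiltonian path inside it from $v$ to its weight-two corner, exit to $0^{k-1}1$, and append the canonical path through the remaining clean subcubes -- or has had all its even neighbours of $v$ removed, in which case $v$ immediately escapes to the untouched half $(1,Q_{k-1})$ (untouched precisely because $a_{k-1}=0$), runs a near-Hamiltonian path there between two even vertices, and then rejoins the canonical path at $0^{k-1}1$. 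This handles every $j\le 2^{k-2}$ and every odd $v$ uniformly, with only Hamiltonian-laceability of subcubes as input. If you want to salvage your approach, the cleanest fix is to abandon the $0/1$-half split in favour of this finer decomposition.
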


\begin{proof}
We first prove this for the case where $v$ is in $(0^{k-i-1}1, Q_i)$ for $a_i = 0$. In this case, we begin with a hamiltonian path starting at $v$ and ending at $0^{k-i-1}10^{i-1}1$, which exists since $v$ is odd but $0^{k-i-1}10^{i-1}1$ is even. From there, we jump to $0^{k-1}1$, and, as in the proof of the previous lemma, construct a path from $0^{k-1}1$ to traverse all vertices in $(0^{k-i'-1}1, Q_{i'})$ for all $i'\neq i$ with $a_{i'} = 0$. This path, as before, has length $2^k - 2j$. 

For the other case, we use the fact that $a_{k-1} = 0$, so no vertex whose first coordinate is 1 was removed in our construction. If $v=0^{k-i-1}1w$ for some $i$ with $a_i = 1$ and $w \in \{0, 1\}^i$, then we jump to $10^{k-i-2}1w$ in $(1, Q_{k-1})$.  This must necessarily be even since $v$ is odd; from there, we take a path of length $2^{k-1} - 2$ to $10^{k-2}1$. Then, as before, we jump to $0^{k-1}1$ and use the same argument to obtain a path of length $2^k - 2j$. 
\end{proof}

Using these lemmata, we can now prove Theorem $\ref{thm:satpaths}$.
\begin{proof}[Proof of Theorem \ref{thm:satpaths}]
Let $k = 2^i+r$ for $0 < r \leq 2^i$. We first prove the case in which $r$ is odd. Let $j = 2^{i-1} - \frac{r-1}{2}$ so $0 < j \leq 2^{i-1}$, and let $C \subset Q_{i+1}$ be the set as in Lemma \ref{lem:maxpath} with $|C|=j$. We claim that $H = Q_{i+1} \setminus C$ is $P_k$-saturated. It is clear that the maximum length of a path in $H$ is $2^{i+1} - 2j = 2^i + r - 1 = k-1$, so $H$ is $P_k$-free. Furthermore, by Lemma \ref{lem:maxpath2}, any odd vertex in $H$ is an endpoint of a copy of $P_{k-1}$. Since any non-edge is incident to both an odd vertex and an even vertex in $C$, the addition of any non-edge creates a copy of $P_k$. Hence $H$ is $P_k$-saturated. 

All that remains, then, is to compute the number of edges in $H$, 
$$
(i+1)2^i - (i+1)j = (i+1)\left(2^{i-1}+\frac{r-1}{2}\right) = \frac{i+1}{2} \cdot \frac{k-1}{2^{i}} \cdot 2^{i}.
$$
From here, by Lemma \ref{lem:endpoints}, we get that
$$
\sat(Q_n, P_k) \leq \frac{i+1}{2} \cdot \frac{k-1}{2^{i}} \cdot 2^{n-1} = (\lfloor \log_2 k \rfloor + 1)\cdot \frac{k-1}{2^{\lfloor \log_2 k \rfloor + 1}} \cdot 2^{n-1},
$$
as desired.

In the case where $r$ is even, let $j = 2^{i-1} - \frac{r-2}{2}$, and similarly let $C \subset \{0, 1\}^{i+1}$ be the set of even vertices of $Q_{i+1}$ as in Lemma \ref{lem:maxpath} with $|C| = j$. Now, let $C'$ be the set of even vertices in $Q_i$ obtained by flipping the first two coordinates of all of the vertices in $C$. Since the first coordinate of every vertex in $C$ is 0, $C \cap C' = \emptyset$. From here, let $H$ be the subgraph of $Q_{i+2}$ such that $E(H)$ consists of all edges in $(0, Q_{i+1} \setminus C)$, all edges in $(1, Q_{i+1} \setminus C')$ and all edges connecting $(0, Q_{i+1})$ and $(1, Q_{i+1})$ which are incident to either $(0, C)$ or $(1, C')$. We claim that $H$ is $P_k$-saturated.

Note that $H$ has two connected components $(0, C) \cup (1, \{0, 1\}^{i+1} \setminus C')$ and $(0, \{0, 1\}^{i+1} \setminus C) \cup (1, C')$. If $H$ contains a path of length $k$, then it must lie in one of the components; let this component be $(0, \{0, 1\}^{i+1} \setminus C) \cup (1, C')$. We know that the maximum length of a path in $Q_{i+1}\setminus C$ is $2^{i+1}-2j = 2^i + r - 2 = k-2$. Since any vertex in $(1, C')$ has only one incident edge in $H$, if there is a path of length $k$ in $H$ then two endpoints of the paths must be in $(1, C')$. Those vertices have same parity, so the length of the path must be odd, contradicting that $k$ is even. Therefore, $H$ is $P_k$-free.

To show saturation, let $uv$ be a non-edge of $H$. Then, $uv$ is either incident to $(0, C) \cup (1, C')$ or $u$ is a vertex in $(0, \{0, 1\}^{i+1}\setminus (C \cup C'))$ and $v$ is a vertex in $(1, \{0, 1\}\setminus (C \cup C'))$. For the first case, without loss of generality assume $v \in (1, C')$. Then $u$ is an even vertex in $(1, \{0, 1\}^{i+1})$, so there is a path of length $k-2$ starting at $u$ in $(1, Q_{i+1} \setminus C')$. On the other hand, there is an edge in $H$ joining $v$ and $(0, \{0, 1\}^{i+1})$ so, together with $uv$, we can construct a path of length $k$. For the second case, suppose that $u$ is an odd vertex. Then there is a path of length $k-2$ in $(0, Q_{i+1}\setminus C)$ starting at $u$. Similarly there is a path of length $k-2$ in the other component starting at $v$, so the addition of $uv$ creates a path of length $2k-3 \geq k$. Finally, if $u$ is an even vertex, we can construct a path of length $k-3$ from $u$ in $(0, Q_{i+1} \setminus C)$ using a similar construction as in the proof of Lemma \ref{lem:maxpath2}. Therefore, by symmetry the addition of $uv$ necessarily creates a path of length $2k-5 \geq k$, so $H$ is $P_k$-saturated.

The number of edges in $H$ is
$$
2((i+1)2^i - (i+1)j) + 2j = \left(\frac{i+2}{2} + \frac{i(r-2)}{2^{i+1}}\right)2^{i+1}.
$$
Thus by Lemma \ref{lem:endpoints}, we get that
$$
\sat(Q_n, P_k) \leq \left(\frac{i+2}{2} + \frac{i(r-2)}{2^{i+1}}\right)2^{n-1} = \lfloor \log_2 k \rfloor \left(\frac{k-2}{2^{\lfloor \log_2 k \rfloor+1}} + \frac{1}{\lfloor \log_2 k\rfloor}\right) 2^{n-1}. \qedhere
$$
\end{proof}

\subsection{Generalized Stars}
Next, we study generalized stars.  The first step in this process is determining the value of $P_j(Q_k)$, which leads us to the following lemma.
\begin{lemma}
\label{lem:edgedisjointpaths}
Given the hypercube $Q_k$ and $j \le k$, $P_j(Q_k) \ge k-1$.
\end{lemma}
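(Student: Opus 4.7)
The plan is to exhibit, at the single vertex $v = 0^k$, a family of $k$ paths of length $k-1$ in $Q_k$ that are pairwise vertex-disjoint away from $v$; restricting to any $j \leq k$ of them will then witness $P_j(Q_k) \geq k-1$. For each $i \in \{1, 2, \ldots, k\}$, I would define the walk $\pi_i$ that starts at $0^k$ and successively flips the coordinates $i,\ i+1,\ i+2,\ \ldots,\ i+k-2$, where all indices are reduced modulo $k$ into $\{1, \ldots, k\}$. Because each step flips exactly one coordinate, $\pi_i$ is a walk of length $k-1$ in $Q_k$, and the first edge goes from $0^k$ to $e_i$, so it is a genuine outgoing edge from $v$.

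The heart of the argument is verifying that the $\pi_i$ are simple and pairwise vertex-disjoint away from $v$. Here I would use the following explicit description: after $t \geq 1$ steps of $\pi_i$, the current vertex has support (set of coordinates equal to $1$) equal to the cyclic arc $\{i,\ i+1,\ \ldots,\ i+t-1\} \pmod{k}$, a subset of $\mathbb{Z}/k$ of size $t$. Two such vertices $u_{i,t}$ and $u_{i',t'}$ with $t, t' \geq 1$ can coincide only when their support arcs are equal. But every such arc has length at most $k-1$ and is therefore a proper arc of the cycle $\mathbb{Z}/k$, which is uniquely determined by its starting index and length. This forces $(i,t) = (i', t')$, establishing both that each $\pi_i$ is itself a simple path and that distinct paths share no vertex other than $v$.

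The only obstacle I anticipate is precisely this disjointness check; once the cyclic-arc viewpoint is in place, it is essentially a tautology, and the whole construction is just a rotation of a single canonical length-$(k-1)$ path through all $k$ cyclic shifts. Since $j \leq k$, selecting $\pi_1, \ldots, \pi_j$ produces $j$ vertex-disjoint paths of length exactly $k-1$ emanating from $v = 0^k$, which yields $P_j(Q_k) \geq k-1$, as desired.
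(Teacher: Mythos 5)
Your proposal is correct and is essentially identical to the paper's proof: the paper also takes the $k$ cyclic shifts of the direction sequence $(1,2,\ldots,k-1)$ emanating from a single vertex and argues disjointness by noting that the sets of coordinates flipped after $t$ steps (your cyclic arcs) are pairwise distinct. Your cyclic-arc bookkeeping just makes the paper's ``$\mathcal{P}_{ai} \ne \mathcal{P}_{bi}$'' step slightly more explicit.
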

\begin{proof}
Let us denote the directions in the $k$-dimensional hypercube by $1$, $2$, $\ldots$, $k$.  We characterize each path in $Q_m$ of length $m$ by a $m$-tuple $(a_1, a_2, \ldots, a_m)$, where $a_i \in \{1, 2, \ldots, k\}$, representing the order of directions travelled.

Consider $k$ paths starting at $v$, $\mathcal{P}_1 = (1, 2, \ldots, k-1)$, $\mathcal{P}_2 = (2, 3, \ldots, k)$, $\ldots$, $\mathcal{P}_k = (k, 1, \ldots, k-2)$.  Refer to the $i$-tuple corresponding to the first $i$ directions in $\mathcal{P}_j$ by $\mathcal{P}_{ji}$.  Now, note that, in these paths, regardless of order, $\mathcal{P}_{ai} \ne P_{bi}$ for all $i \in \{1, 2, \ldots, k\}$, $a \ne b$.  This implies that the paths must be vertex-disjoint and, since they are all of length $k-1$, we are done.  
\end{proof}

However, we conjecture that in fact $P_j(Q_k)$ can be much larger.  Let $\mathcal{E}(n) = \displaystyle\sum\limits_{k=2}^{\left \lfloor \frac{n}{2} \right \rfloor} \binom{n}{k}$ and $\mathcal{O}(n) = 2^n - \mathcal{E}(n) - 1$.  Then we conjecture the following:
\begin{conjecture}
Let $a = \left \lfloor \frac{\mathcal{E}(n)}{j} \right \rfloor$ and $b = \left \lfloor \frac{\mathcal{O}(n)}{j} \right \rfloor$.  Then, if $\min\{a, b\} = b$, $P_j(Q_k) = 2b$.  Otherwise, $P_j(Q_k) = 2a+1$.  
\end{conjecture}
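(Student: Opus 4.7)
The plan is to prove matching upper and lower bounds on $P_j(Q_n)$, interpreting $\mathcal{E}(n)$ and $\mathcal{O}(n)$ as the counts of non-root even-weight and odd-weight vertices of $Q_n$, respectively (which is consistent with $\mathcal{E}(n) + \mathcal{O}(n) + 1 = 2^n$ as stated). For the upper bound I would exploit the bipartite structure of $Q_n$. Fix the common endpoint to be $v = 0^n$ without loss of generality; since each edge of $Q_n$ flips one coordinate, every path from $v$ strictly alternates between even-weight and odd-weight vertices, with the vertex at position $i$ having weight parity $i \bmod 2$. Hence $j$ vertex-disjoint paths of common length $L$ from $v$ use exactly $j \lceil L/2 \rceil$ odd-weight and $j \lfloor L/2 \rfloor$ non-root even-weight vertices, forcing $\lceil L/2 \rceil \le b$ and $\lfloor L/2 \rfloor \le a$. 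Optimizing over the parity of $L$ then yields $L \le 2b$ when $L$ is even and $L \le 2a+1$ when $L$ is odd; a short case analysis using $\mathcal{O}(n) \ge \mathcal{E}(n)$ (so $b \ge a$ in typical regimes) shows that the larger of these two values is exactly the quantity claimed in the conjecture.

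For the matching lower bound I would construct $j$ vertex-disjoint paths of the claimed length by induction on $n$, using the product decomposition $Q_n = Q_{n-1} \ \Box \ Q_1$. Given an optimal family of paths from $0^{n-1}$ in each of the two copies of $Q_{n-1}$, one stitches corresponding paths across the extra coordinate in the style of the Gray-code constructions of Lemmas \ref{lem:maxpath} and \ref{lem:maxpath2}, roughly doubling the available length per path. Lemma \ref{lem:edgedisjointpaths} furnishes the base case for $j \le n$ with length $n-1$, and the parity bookkeeping from the upper bound tells us exactly how much ``budget'' of each parity class remains and therefore where each stitched path should be cut. The construction can be organized so that the paths cover almost all vertices of $Q_n$, with only a controlled number of vertices of a single parity class left unused.

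The main obstacle will be the tight boundary cases, in which $j$ divides (or nearly divides) $\mathcal{E}(n)$ or $\mathcal{O}(n)$ and the construction must use essentially every vertex of one parity class. Here the stitched paths have no slack to adjust their endpoints, so one must specify precisely which neighbor of $v$ each path begins with and which vertex of the opposite parity class each path terminates at. The hardest sub-case is when $a = b$ or $a = b-1$, where trading an even-length path for an odd-length path of one more step uses exactly one extra odd-weight vertex that may not be reachable from a given path's last vertex without colliding with another path. My plan for this obstacle is to prove a local modification lemma, in the spirit of Lemma \ref{lem:maxpath2}, that reroutes the tail of one path through an auxiliary $Q_2$ or $Q_3$ substructure in order to swap a pair of parity classes and free the needed vertex. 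A preliminary verification of the conjecture for $n \le 5$ and all admissible $j$ should both confirm the statement and suggest the precise form of this modification lemma before committing to the general induction.
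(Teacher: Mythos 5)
You should first be aware that the paper contains no proof of this statement: it is posed as a conjecture, verified by the authors only for small dimensions, so there is nothing to compare your argument against and it must stand on its own. On its own terms, the upper-bound half of your proposal is correct and essentially complete. Your reading of $\mathcal{E}(n)$ and $\mathcal{O}(n)$ as the numbers of non-root even-weight and of odd-weight vertices is surely the intended one: the printed formula $\sum_{k=2}^{\lfloor n/2\rfloor}\binom{n}{k}$ must be a typo for $\sum_{k\ge 1}\binom{n}{2k}=2^{n-1}-1$, since taken literally it gives $\mathcal{E}(3)=0$ and hence $P_3(Q_3)=1$, contradicting Lemma \ref{lem:edgedisjointpaths}. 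With that reading, the parity count $j\lceil L/2\rceil \le \mathcal{O}(n)$ and $j\lfloor L/2\rfloor\le\mathcal{E}(n)$ does yield the claimed value; your case analysis is slightly garbled as stated but is easily repaired by noting that $\mathcal{O}(n)=\mathcal{E}(n)+1$ forces $b\in\{a,a+1\}$, so the conjecture's two cases are exactly $b=a$ (answer $2a$) and $b=a+1$ (answer $2a+1$).

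The genuine gap is the lower bound, which is the entire content of the conjecture, and the induction you sketch does not close it. Two concrete problems. First, your hypothesis concerns $j$ paths sharing a common root, but after splitting $Q_n=Q_{n-1}\ \Box\ Q_1$ only one copy of $Q_{n-1}$ contains that root; what you need in the other copy is a family of $j$ vertex-disjoint paths starting at $j$ prescribed crossing vertices (which you do not get to choose freely) and covering almost every vertex. That is a near-spanning linear-forest problem with boundary conditions, a strictly stronger statement than ``$P_j(Q_{n-1})$ equals the claimed value,'' so the induction as stated is not self-sustaining. Second, the floors do not compose with doubling: the conjecture gives $P_3(Q_4)=4$ but $P_3(Q_5)=10>2\cdot 4+1$, so any scheme that extends each path by one crossing edge plus an optimal tail in the second copy falls short, and the paths must weave between the two copies several times --- exactly the regime where the stitching picture breaks down. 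Your instinct to strengthen the induction hypothesis to near-spanning families with a controlled leftover of one parity class is the right one, but that strengthened statement (which vertices may be left over, and where the $2j$ path endpoints may sit) is never formulated, and the ``local modification lemma'' meant to handle the tight cases is named but not stated. As written this is a plausible program, not a proof.
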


We have been able to show this conjecture for $k = 2 - 8$ and hope to prove it completely in the future.  Regardless, we were able to use this definition to prove Theorem \ref{thm:genstarthm}, our upper bound on the saturation number of generalized stars.
\subsubsection{Proof of Theorem \ref{thm:genstarthm}}
\begin{proof}
We begin by constructing a $GS_{k, m}$-saturated graph in $Q_k$, which we denote by $H$.  Within $Q_k$, consider two $(k-1)$-dimensional subcubes, $A = (0, \{0, 1\}^{k-1})$ and $B =  (1, \{0, 1\}^{k-1})$.  We start by adding all of the edges in $A$ to $H$.  

Next, we split $B$ into $2^{k-1-m^{\prime}}$ disjoint $m^{\prime}$-dimensional hypercubes $b_1, b_2, \ldots, b_{k-1-m^{\prime}}$, and add all of the edges within the $b_i$ to $H$.  Each of the $b_i$ can be represented by $(e_1, e_2, \ldots, e_{t-1}, \{0, 1\}^{m^{\prime}}, \\ e_{t}, \ldots, e_{k-m^{\prime}})$, for some set of $e_j \in \{0, 1\}$.   In each of these, consider the vertex $(e_1, e_2, \ldots, e_{t-1}, 0, \\0, \ldots, 0, e_{t}, \ldots, e_{k-m^{\prime}})$.  This set of vertices comprises a $(k-1-m^{\prime})$-dimensional hypercube $C$.  Within $C$, consider a set of disjoint $2^{k-1-m^{\prime}-j}$ $j$-dimensional hypercubes $c_1, c_2, \ldots, c_{k-1-m^{\prime}-j}$, and add to $H$ a path of length $2^{j}-1$ in each of the $c_i$.  Notice that this path covers all of the vertices in each of the $c_i$.

From here, let $f = m - 2^{m^{\prime}}$, and, in each of the $c_i$, let $c_{i1}$ denote the vertex for which the longest path starting at this vertex is $f-1$.  Note that there must exist such a vertex, as $2^{j-1} < f \le 2^{j}$.  From each of these $c_{i1}$, add to $H$ the edge between it and $A$.  Then, within each of the $c_i$, use Lemma $\ref{lem:greedy}$ to greedily add edges until they are $c_{i1}$ $P_f$-saturated.  In other words, the addition of any new edge within these $c_i$ should create a copy of $P_f$ starting at $c_{i1}$.  We exclude the case $f = 2^{j}$, and therefore this is possible since each $c_i$ evidently can contain a path of length $2^{j}-1$.  For $f = 2^j$, we simply add every edge in $c_{i1}$, and therefore there are no non-edges left within the $c_i$ to worry about when considering saturation.  Note that we can bound above the number of edges added in each $c_i$ by $j2^{j-1}$.

Now, we claim that $H$ is almost saturated.  In particular, we claim that $H$ is free of $GS_{k, m}$, and that the addition of any edge in $B$ creates a copy of $GS_{k, m}$.  We first show the former.  Note that the central vertex of a $GS_{k, m}$ in $H$, if there were one, would have to be in $A$, since no vertex in $B$ has degree $k$.  Since our construction is symmetrical around connected vertices, let us choose an arbitrary vertex in $A$ that is connected to $B$.  From there, since $m \le P_{k-1}(Q_{k-1})$, we can fit $k-1$ legs of our $GS_{k, m}$ in $A$.  The final leg must traverse as its first edge the edge connecting it to $B$.  It is now in both a $c_i$ and a $b_i$.  Note that by construction no two $c_i$ are connected, so the only edges this path can use are the edges of the $c_i$ and the corresponding $2^{j}$ $b_i$.  However, also note that the $b_i$ are only connected by at most one edge, and therefore, once our path uses edges of a $b_i$, it cannot return to traversing the edges of a $c_i$.  It follows that the second and following edges, to achieve the maximal possible path, must be on a $c_i$.  The maximum possible number of edges in such a path, by construction, is $f - 1 = m - 2^{m^{\prime}} -1$.  From there, the only remaining edges that can be added lie on the $b_i$ on which the end of the path is currently located.  The maximum possible length of such a path is $2^{m^{\prime}}-1$, since it is in a $m^{\prime}$-dimensional hypercube.  Therefore, the maximum length of the final leg in $H$ is $1 + m-2^{m^{\prime}}-1 + 2^{m^{\prime}}-1 = m-1$, so $H$ is $GS_{k, m}$-free.  For the sake of rigor, note that there are no connecting edges back to $A$ along our possible paths once we enter $B$, and therefore our maximal path length is preserved.  

Next, we must show that the addition of any edge within $B$ creates a copy of $GS_{k, m}$.  First note that there are three types of non-edges in $B$: non-edges within a $c_i$, non-edges between $c_i$'s, and non-edges between $b_i$'s but not between or within $c_i$'s.  

\textbf{Case 1:} non-edges within a $c_i$

If the edge is within a $c_i$, we know by construction this creates a $P_f$.  By the same maximal construction as described when we showed that originally $H$ was $GS_{k, m}$-free, we now have a maximal leg length of $1 + f + 2^{m^{\prime}}-1 = 1 + m-2^{m^{\prime}} + 2^{m^{\prime}}-1 = m$, and therefore we have a copy of $GS_{k, m}$.  

\textbf{Case 2:} non-edges between $c_i$'s

In this case, let us consider again the final leg of our generalized star. This leg consists of an edge between $A$ and $B$, a path on the current $c_i$ to the vertex that is incident to the added non-edge, the new edge, the remaining path on the new $c_i$, and a path of length $2^{m^{\prime}}-1$.  Now, note that our path within individual $c_i$'s is broken up by the new edge between them, but the path itself at minimum has the same length as the longest path starting at a  $c_{i1}$, which we know to be $f-1$.  This gives us an overall leg length of $2 + m-2^{m^{\prime}}-1 + 2^{m^{\prime}}-1 = m$, thereby creating a copy of $GS_{k, m}$.  

\textbf{Case 3:} non-edges between $b_i$'s

Here, again, all that concerns us is the final leg, which we begin by traversing the $1$st direction into the $c_i$ in $B$ that contains the $b_i$ containing a vertex incident to our newly added edge.   From there, we traverse a path within the $c_i$ to this $b_i$, traverse a path in this $b_i$ of length at least $2^{m^{\prime}}-2$ ending at the incident vertex, traverse the new edge, and then traverse a path of length $2^{m^{\prime}}-2$ to the vertex in this new $b_i$ that is also contained within a $c_i$.  From there, we traverse the path from this vertex on the $c_i$ of length at least $\left \lceil \frac{f}{2} \right \rceil$.  At minimum, this path is of length $2 + 2^{m^{\prime}} - 2 + 2^{m^{\prime}} - 2 + \left \lceil \frac{f}{2} \right \rceil = 2^{m^{\prime} +1}-2 + \left \lceil \frac{f}{2} \right \rceil$.  The only case in which $2^{m^{\prime} + 1} - 2$ may not be sufficient is when $m = 2^{m^{\prime} + 1}$, but in this case $\left \lceil \frac{f}{2} \right \rceil = 2^{m^{\prime}} \ge 2$. Therefore, our newly created leg is sufficiently long and we have a copy of $GS_{k, m}$, so $H$ is almost saturated.

From there, the only remaining set of edges to deal with is the set of edges between $A$ and $B$.  In this case, we simply use Lemma \ref{lem:greedy} to greedily add edges until $H$ is saturated.  This adds a maximum of $2^{k-1}$ edges, and gives us that $H$ is saturated in $Q_k$.

To scale this up to $Q_n$, simply place copies of $H$ at the vertices of $Q_{k} \ \Box \ Q_{n-k}$.  Added edges between these copies either connect copies of $A$ or copies of $B$.  Edges connecting copies of $A$ evidently create $GS_{k, m}$, since we simply take $k-1$ legs in one copy of $A$ and the final one in the other, since it is a full $Q_{k-1}$.  For edges that connect copies of $B$, we can easily see that it is equivalent to the argument that edges within $B$ create $GS_{k, m}$, since we are simply connecting vertices in disjoint $c_i$ or $b_i$.  Therefore, all that remains is to compute the maximum number of edges in $H$, which gives us an upper bound of 
\[2^{n-k} \cdot \left((k-1)2^{k-2} + 2^{k-1} + 2^{k-1-m^{\prime}} \cdot m^{\prime}2^{m^{\prime}-1} + 2^{k-1-m^{\prime}-j} \cdot j2^{j-1}\right) = (k+1+m^{\prime} + \frac{j}{2^{m^{\prime}}} )\cdot 2^{n-2}.   \]
\end{proof}
Notice that this proof still applies even if the generalized star is not \emph{balanced}: that is, if the legs of the generalized star are of different lengths.  We can see this because all of the unbalanced cases reduce to the class of unbalanced generalized stars in which all but one leg are of the maximum length in the original generalized star, and the final leg is of the minimum length.  We then are able to place all but one leg in $Q_{k-1}$ and use a similar construction as in Theorem $\ref{thm:genstarthm}$  to create a saturated graph for the remaining leg.  
\vspace{-0.12 in}


\section{Bounds on $\sat(Q_n, T_k)$ using the Hamming code}
\label{sec:treeshamming}
In this section, we prove our upper bounds on the saturation number of trees whose constructions are based on the Hamming code.  We first prove the special case of stars.  Then, in Section $6.2$, we extend this argument  to derive our general upper bound on caterpillars with large minimum degree, and then show how this can be extended to other classes of trees in Section $6.3$.
\subsection{Stars}

First, we prove an upper bound for the specific case of stars. Note that it is easy to get the bound $\sat(Q_n, S_k) \leq (k-1)2^{n-1}$ by Theorem $\ref{thm:supahimportant}$, as the removal of any edge from a star
with $k$ edges lowers the cubical dimension.   We improve this bound.

\begin{theorem}
\label{thm:upstars}
Given the star $S_k$, $\sat(Q_n, S_k) \le (k - 2 + o_{k}(1))2^{n-1}$.  
\end{theorem}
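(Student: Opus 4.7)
The plan is to build, inside $Q_k$ itself, an $S_k$-saturated subgraph $H_0$ of edge density $k-2+o_k(1)$, and then invoke Lemma~\ref{lem:endpoints} to scale it to $Q_n$. Set $t:=\lfloor\log_2(k+1)\rfloor$ and $r:=k-2^t+1$, so $k-r+1=2^t$. Let $C_0\subset\{0,1\}^{2^t-1}$ be the Hamming code and take the backbone dominating set to be $C:=C_0\times\{0,1\}^r\subseteq V(Q_k)$. Because $C_0$ has minimum distance $3$, a direct check shows that $C$ is a perfect dominating set whose induced $Q_k$-subgraph is precisely the disjoint union of $2^{k-r}/(k-r+1)$ copies of $Q_r$ with no edges between distinct copies, realizing the conclusion of Theorem~\ref{thm:hamming_cube} with a linear (coset) structure that I will need.

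Now form $H_0$ by including every $Q_k$-edge that lies inside $C$ together with every $Q_k$-edge that lies inside $V\setminus C$, and no $C$-to-$(V\setminus C)$ edges. Vertices of $V\setminus C$ have $k-1$ of their $k$ $Q_k$-neighbors in $V\setminus C$, so they get degree $k-1$; vertices of $C$ get degree $r\le k-1$; hence $H_0$ is $S_k$-free. For saturation I would run through the three kinds of $Q_k$-non-edges: within $V\setminus C$ (none exist, since we included them all), within $C$ (none exist, by the no-cross-piece property), and $C$-to-$(V\setminus C)$ (the $V\setminus C$-endpoint already has degree $k-1$, so adding the edge creates $S_k$). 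Using $r+(k-1)(k-r)=(k-r+1)(k-2)+2$ to simplify,
\[
e(H_0)\;=\;\frac{r+(k-1)(k-r)}{k-r+1}\,2^{k-1}\;=\;\Bigl(k-2+\tfrac{2}{2^t}\Bigr)\,2^{k-1}.
\]

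To apply Lemma~\ref{lem:endpoints} with endpoint set $U=V\setminus C$ I must find a $Q_k$-automorphism sending the non-endpoints $V(H_0)\setminus U=C$ into $U$. For $t\ge 2$ (so $r<k-1$ and $C\cap U=\emptyset$), the XOR-translation $\sigma(v):=v\oplus(x_1,0)$ with $x_1\in\{0,1\}^{2^t-1}$ any weight-one vector works: since $C_0$ has minimum distance $3$ we have $x_1\notin C_0$, and hence $\sigma(C)=(C_0+x_1)\times\{0,1\}^r$ is disjoint from $C$. Lemma~\ref{lem:endpoints} then yields, using $2^t>(k+1)/2$,
\[
\sat(Q_n,S_k)\;\le\;\Bigl(k-2+\tfrac{2}{2^t}\Bigr)2^{n-1}\;\le\;\Bigl(k-2+\tfrac{4}{k+1}\Bigr)2^{n-1}\;=\;\bigl(k-2+o_k(1)\bigr)\,2^{n-1}.
\]
The step I expect to be most delicate is this automorphism construction: Theorem~\ref{thm:hamming_cube} by itself only guarantees the existence of a perfect dominating set of the right combinatorial shape, so one has to commit to the explicit Hamming-$\times$-cube realization of $C$ in order to have a coset-translation that pushes $C$ entirely off itself.
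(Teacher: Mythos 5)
Your proof is correct and follows essentially the same route as the paper's: take a perfect dominating set $C$ of $Q_k$ inducing disjoint copies of $Q_r$ (your $r$ is the paper's $m$), keep exactly the edges inside $C$ and inside $V(Q_k)\setminus C$, compute the same edge count $(k-2+\tfrac{2}{2^t})2^{k-1}$, and scale up via Lemma~\ref{lem:endpoints}. Your only departure is to realize the dominating set explicitly as $C_0\times\{0,1\}^r$ and to exhibit the coset-translation automorphism needed for Lemma~\ref{lem:endpoints} --- a hypothesis the paper invokes without verifying --- so your version is, if anything, slightly more complete.
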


\begin{proof}
Let $k = 2^t - 1 + m$ with $0 \leq m < 2^t$. Theorem \ref{thm:hamming_cube} implies that there is a perfect dominating set $C$ of $Q_k$ such that $C$ induces a subgraph of $Q_k$ which is a disjoint union of $\frac{2^{k-m}}{k-m+1}$ copies of $Q_m$. Let $H$ be the subgraph of $Q_k$ consisting of edges within $C$ or within $V(Q_k)\setminus C$. We claim that $H$ is $S_k$-saturated.

We first prove that $H$ is $S_k$-free, or equivalently, $\deg_H(u) \leq k-1$ for all $u \in V(Q_k)$. If $u \in C$, then by construction $\deg_H(u) = m \leq k-1$. On the other hand, if $u \not\in C$ then $\deg_H(u) = k-1$ since $u$ is adjacent to exactly one vertex in $C$. These two facts imply that $H$ is $S_k$-saturated, since any non-edge $uv$ of $H$ must necessarily lie between $C$ and $V(Q_k) \setminus C$.

Note that the number of edges of $H$ is $e(Q_k) - |V(Q_k) \setminus C|$ since there is a one-to-one correspondence between non-edges of $H$ and $V(Q_k) \setminus C$. Thus, we have that
$$
e(H) = k2^{k-1} - \left(2^{k} - \frac{2^{k}}{k-m+1}\right) = \left(k - 2 + \frac{2}{k-m+1}\right)2^{k-1}.
$$
From here, we use Lemma \ref{lem:endpoints} to get that
$$
\sat(Q_n, S_k) \leq \left(k - 2 + \frac{2}{k-m+1} \right) 2^{n-1}.
$$
Since $k-m+1 = 2^t \geq \frac{k+2}{2}$, we have from this an upper bound of $(k-2+o_k(1))2^{n-1}$. 
\end{proof}

\subsection{Caterpillars}
Next, we study $\sat(Q_n, S_{k_1 \times k_2 \times \cdots \times k_m})$.   We begin by deriving improved upper
bounds for all caterpillars with three and four vertices on the central path, and then prove our general upper bound for caterpillars with sufficiently large minimum degree.

For convenience, in these proofs, we denote the construction in which we create our saturated graph in a hypercube of dimension $\kappa$ such that we include all edge in and incident to a dominating set of $Q_i$'s as a \emph{$\kappa$-construction}.  Such constructions allows us to cover all but some set number of vertices in our caterpillar, since $\kappa$ can be chosen arbitrarily to be larger than $k_1, k_2, \ldots, k_m$.  

With that aside, we begin by finding bounds on $S_{k_1 \times k_2 \times k_3}$ and $S_{k_1 \times k_2 \times k_3 \times k_4}$ for completeness, as they are not completely covered by the general case.
\begin{theorem}
\label{thm:triplestar}
For all $k_1, k_2, k_3 \ge 2$, $\sat(Q_n, S_{k_1 \times k_2 \times k_3}) \le \min\{k_1, k_2, k_3\} \cdot 2^{n-1}$. 
\end{theorem}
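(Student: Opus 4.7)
The plan is to adapt the Hamming-code construction used in Theorem \ref{thm:doublestar} to the triple-star setting. Let $r = \min\{k_1, k_2, k_3\}$. Using the isomorphism $S_{k_1 \times k_2 \times k_3} \cong S_{k_3 \times k_2 \times k_1}$, we may relabel so that $k_1 \le k_3$; the minimum $r$ is then attained either at an endpoint ($r = k_1$) or at the middle vertex ($r = k_2$) of the central path. Let $\kappa$ be the smallest integer of the form $2^t - 1$ with $\kappa \ge \max\{k_1, k_2, k_3\}$, and let $C$ be the Hamming code in $Q_\kappa$, which is a perfect dominating set of minimum pairwise distance $3$.

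We build a candidate subgraph $H \subseteq Q_\kappa$ by first including every edge of $Q_\kappa$ incident to $C$---so each $c \in C$ attains $\deg_H(c) = \kappa$ and each $\bar{c} \in \bar{C}$ has exactly one $H$-neighbor in $C$---and then, using Lemma \ref{lem:bipartitehall} on the $(\kappa-1)$-regular bipartite subgraph that $Q_\kappa$ induces on $\bar{C}$, adding an $(r-2)$-regular bipartite subgraph, so that every $\bar{c} \in \bar{C}$ attains $\deg_H(\bar{c}) = r - 1$. The graph $H$ is $S_{k_1 \times k_2 \times k_3}$-free: any length-two path in $H$ has one of the degree patterns $(\kappa, r-1, r-1)$, $(r-1, \kappa, r-1)$, or $(r-1, r-1, r-1)$ depending on where the path meets $C$ (using that $C$-vertices are pairwise non-adjacent), and in each case some caterpillar role demands degree at least $r$ while the corresponding vertex has only $H$-degree $r-1$.

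For saturation, any non-edge $uv$ of $H$ must satisfy $u, v \in \bar{C}$, since all $C$-incident edges are already in $H$. Adding $uv$ brings $u$ and $v$ up to degree $r$, and the path $c_u - u - v$, where $c_u$ denotes the unique $C$-neighbor of $u$, now has degree pattern $(\kappa, r, r)$. We embed the central path of $S_{k_1 \times k_2 \times k_3}$ on this path, placing the largest-degree caterpillar vertex at $c_u$ and drawing the three sets of leaves from $N_H(c_u) \setminus \{u\}$, the matching partners of $u$, and the matching partners of $v$ (with $c_v$ also available as a leaf of $v$, because triangle-freeness of $Q_\kappa$ forces $c_u \ne c_v$). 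Exactly as in Theorem \ref{thm:doublestar}, the edge count is
\[
e(H) \;=\; \frac{\kappa \cdot 2^\kappa}{\kappa + 1} \;+\; (r-2) \cdot \frac{\kappa \cdot 2^{\kappa-1}}{\kappa + 1} \;=\; \frac{\kappa r \cdot 2^{\kappa-1}}{\kappa + 1},
\]
and tiling $H$ across $Q_n = Q_\kappa \,\Box\, Q_{n-\kappa}$ (with no edges between copies) yields $\sat(Q_n, S_{k_1 \times k_2 \times k_3}) \le \tfrac{\kappa r}{\kappa+1}\cdot 2^{n-1} \le r \cdot 2^{n-1}$.

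The main obstacle I anticipate is the saturation step in the regime $r = k_1 < k_2$: the middle caterpillar role must then be placed at $u$ (or $v$), which has degree only $r < k_2$ in $H + uv$, so no three-vertex path along $c_u - u - v$ admits the required embedding directly. In this regime one must either find an alternative embedding that uses further structure of $H + uv$ beyond the $(c_u, u, v)$ path (for instance, exploiting that $c_u$ has $\kappa - 1$ other $\bar{C}$-neighbors with known adjacencies), or replace the $(r-2)$-regular $\bar{C}$-subgraph with a more carefully chosen subgraph that raises $\deg_H$ near $u, v$ without creating a premature copy of the caterpillar. The greedy completion of Lemma \ref{lem:greedy} can then be invoked to patch any residual non-saturated non-edges, and because only a bounded-degree local obstruction is being resolved this contributes only a lower-order number of edges, preserving the asymptotic bound $r \cdot 2^{n-1}$.
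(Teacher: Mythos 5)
There is a genuine gap, and it is exactly where you suspected trouble --- but it is worse than a local obstruction that Lemma~\ref{lem:greedy} can patch. With a single Hamming code $C$, every vertex of $V(Q_\kappa)\setminus C$ has exactly one neighbor of degree $\kappa$, and the vertices of $C$ are pairwise at distance at least $3$. Consequently your graph $H$ (and $H$ plus any one non-edge) contains \emph{no} path of three vertices with two of them having degree exceeding $r$: two such vertices would both have to lie in $C$, but no two vertices of $C$ are at distance $\le 2$. The path $c_u - u - v$ you exhibit has degree pattern $(\kappa, r, r)$, so your embedding succeeds only when $k_2 = r$ \emph{and} at least one of $k_1, k_3$ also equals $r$. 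In every other case --- the minimum attained only at an endpoint ($k_1 = r < k_2, k_3$, where the middle role needs degree $k_2 > r$), and also the case $k_2 = r < k_1, k_3$ which you did not flag (there the third central vertex must go to $v$, which has degree only $r < \min\{k_1,k_3\}$) --- \emph{no} non-edge of $H$ creates a copy of the caterpillar. The set $S$ of Lemma~\ref{lem:greedy} is then essentially all $(\kappa - r + o(1))2^{\kappa-1}$ non-edges of $Q_\kappa$, not a lower-order correction, so the greedy completion destroys the bound rather than rescuing it.

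The paper's proof (deferred to the construction of Theorem~\ref{thm:quadruplestar}) supplies the missing idea: use \emph{translated} Hamming codes $C$ and $D = C + v_1$, saturating with respect to $S_{\kappa-1 \times r \times \kappa-1}$ or $S_{r \times \kappa-1 \times \kappa-1}$ according to where the minimum sits. Every vertex outside $C \cup D$ then has one degree-$\kappa$ neighbor in $C$ and a distinct one in $D$, which handles the middle-minimum case via the path $c - u - d$ with pattern $(\kappa, r, \kappa)$; and each $c \in C$ is adjacent to $c + v_1 \in D$, which handles the endpoint-minimum case via $u - c - (c+v_1)$ with pattern $(r, \kappa, \kappa)$. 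One must also check (as the paper does) that the two big stars sharing a leaf is harmless, which is why the target caterpillar has parameters $\kappa - 1$ rather than $\kappa$. Your freeness argument and your edge count for the one-code graph are correct, but they match Theorem~\ref{thm:doublestar}, not the two-code count needed here; the construction itself must change before the theorem follows.
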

\begin{proof}
The proof of this is very similar in construction to the cases of Theorem $\ref{thm:quadruplestar}$, so we leave it to the reader.
\end{proof}

\begin{theorem}
\label{thm:quadruplestar}
For all $k_1, k_2, k_3, k_4 \ge 3$, $\sat(Q_n, S_{k_1 \times k_2 \times k_3 \times k_4}) \le \min\{k_1, k_2, k_3, k_4\}\cdot 2^{n-1}$.
\end{theorem}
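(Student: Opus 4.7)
The plan is to mirror the Hamming-code construction of Theorem~\ref{thm:doublestar}, arranging the four-vertex central path of the caterpillar to arise from the length-$3$ path produced when a single non-edge is added to the dominating-set structure. Set $r = \min\{k_1, k_2, k_3, k_4\}$, fix $\kappa$ of the form $2^t - 1$ with $\kappa \geq \max\{k_i\}$, and let $C \subset V(Q_\kappa)$ be the Hamming code. Build $H_0$ by adding every $Q_\kappa$-edge incident to $C$, so that each $C$-vertex has degree $\kappa$; then, by applying Lemma~\ref{lem:bipartitehall} in reverse to the bipartite $(\kappa - 1)$-regular induced subgraph on $V(Q_\kappa) \setminus C$, add an $(r - 2)$-regular bipartite subgraph, so that every non-$C$ vertex has total degree $r - 1$. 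Lifting $H_0$ to a subgraph of $Q_n = Q_\kappa \,\Box\, Q_{n - \kappa}$ by placing it in every $Q_\kappa$-slice produces a subgraph with the desired $r \cdot 2^{n-1}$ edges, by the same computation as in Theorem~\ref{thm:doublestar}.

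Next I would check that $H$ is $S_{k_1 \times k_2 \times k_3 \times k_4}$-free: any embedded copy would require two adjacent central vertices $v_2, v_3$ of degrees at least $k_2, k_3 \geq r$, which forces both into $C$, but $C$ is an independent set in a Hamming code. For saturation, every non-edge of $Q_n$ not in $H$ is either an intra-slice non-$C$/non-$C$ edge or a cross-slice edge between corresponding vertices of two $Q_\kappa$-copies. In the intra-slice case, adding $uv$ yields the path $w_u - u - v - w_v$ with degree sequence $(\kappa, r, r, \kappa)$, where $w_u \neq w_v$ by triangle-freeness of $Q_\kappa$. Mapping $(v_1, v_2, v_3, v_4)$ to $(w_u, u, v, w_v)$ satisfies the endpoint degree conditions $\kappa \geq k_1, k_4$ automatically, leaves the middle conditions as $r \geq k_2, k_3$, and supplies the required $k_i - 1$ or $k_i - 2$ leaves from the $\kappa - 1$ non-$C$ neighbors of $w_u, w_v$ outside the path together with the non-$C$ matching partners of $u, v$. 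Distinctness of the leaves is automatic because different $C$-vertices have disjoint non-$C$ neighborhoods. Cross-slice non-edges reduce to the intra-slice analysis by the same $Q_\kappa \,\Box\, Q_{n-\kappa}$ argument used in Theorem~\ref{thm:doublestar}.

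The main obstacle is the middle-degree requirement $r \geq \max\{k_2, k_3\}$ forced by the direct embedding, which matches the target bound $r \cdot 2^{n-1}$ only when $\min$ is attained at both $k_2$ and $k_3$. When the minimum is attained elsewhere, say $\min = k_1$ strictly with $k_2, k_3 > r$, the inner positions of the length-$3$ path no longer have enough degree, and one must enrich the dominating set so that the path produced by $uv$ extends into additional adjacent high-degree vertices. I would address this by invoking Weichsel's Theorem~\ref{thm:hamming_cube} with $r_d \geq 1$ in place of the Hamming code, so that the induced subgraph on $C$ is a disjoint union of $Q_{r_d}$-subcubes and each $w \in C$ has $r_d$ adjacent high-degree $C$-neighbors; the path produced by $uv$ then extends on each side as $w_u^{(1)} - w_u - u - v - w_v - w_v^{(1)}$, and one selects the length-$3$ sub-path whose low-degree position aligns with the coordinate where $\min\{k_i\}$ appears. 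The technical crux is then to verify simultaneously that this enriched $C$-structure does not itself already contain the caterpillar (which restricts how large $r_d$ can be taken relative to the $k_i$) and that the leaf accounting continues to work in every sub-path choice, especially in the degenerate configurations where the non-$C$ matching partners of $u, v$ or the second Hamming-code neighbor of $w_u$ coincide with the leaves required elsewhere on the central path.
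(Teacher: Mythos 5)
There is a genuine gap, and it is exactly at the point you flag as the ``main obstacle.'' In your construction the added non-edge $uv$ is used as a \emph{central-path} edge, so $u$ and $v$ occupy two consecutive positions of the central path and both have degree exactly $r$ after the addition. Every $4$-vertex sub-path of your extended path $w_u^{(1)} - w_u - u - v - w_v - w_v^{(1)}$ still contains $u$ and $v$ in consecutive positions, so no choice of sub-path avoids requiring two consecutive $k_i$'s to be at most $r$; the Weichsel enrichment therefore does not rescue the case where the minimum is attained at a single position (e.g.\ $S_{5\times 5\times 3\times 5}$). Worse, the pattern with the low-degree vertex strictly in the interior, $(\kappa,\kappa,r,\kappa)$, is unattainable in any construction where each non-code vertex has only \emph{one} high-degree neighbor, since the position-$3$ vertex would need high-degree neighbors on both sides. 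The paper's resolution is different in kind: it uses two (resp.\ three) \emph{translated} Hamming codes $C$ and $D = C+v_1$ (resp.\ and $E = C+v_2$), adds all edges incident to their union, and makes the remaining vertices $(r-1)$-regular via Lemma~\ref{lem:bipartitehall}. Then the added edge $uv$ is used as a \emph{pendant} edge of the caterpillar: the central path is $d_0 - c - v - d$ (degrees $\kappa,\kappa,r,\kappa$) when the minimum sits at an interior position, or $v - d - c - e$ (degrees $r,\kappa,\kappa,\kappa$) when it sits at an end, so only one low-degree vertex ever appears on the central path, and it appears exactly where the minimum $k_i$ does.

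A secondary gap: in your lift to $Q_n$, a cross-slice non-edge joining two code vertices does not create the caterpillar in the single-code construction, because the code is independent within each slice and all other neighbors of those two vertices have degree $r-1 < r \le \min_i k_i$; so the lifted graph is not saturated. The translated-code construction avoids this because $C$ and $D$ vertices are already adjacent within a slice, so a cross edge between code vertices immediately yields a central path of four degree-$\kappa$ vertices. Your freeness argument and the leaf-counting for the case $k_2 = k_3 = r$ are essentially fine (modulo the small overlap of emanating edges, which the paper also handles by dropping one shared leaf and working with $\kappa-1$ in place of $\kappa$), but as written the proposal proves the theorem only in that special case.
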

\begin{proof}
First, consider the cases where $k_2= \min\{k_1, k_2, k_3, k_4\}$ or $k_3 = \min\{k_1, k_2, k_3, k_4\}$ (these are equivalent by symmetry), and either way let $r = \min\{k_1, k_2, k_3, k_4\}$.  Within these cases, we will use a $\kappa$-construction with $i = 0$, and will find a $S_{\kappa-1 \times \kappa-1 \times r \times \kappa-1}$-saturated graph, which in turn will imply our result.  Denote this saturated graph by $H$.  Begin by considering two Hamming codes on $Q_n$, $C$ and $D = C + v_1$, or a translated copy of $C$ in the $1$st direction.  Now, add to $H$ all edges incident to $C$ and $D$, thereby giving all of the vertices in $C \cup D$ degree $\kappa$.  From there, use Lemma $\ref{lem:bipartitehall}$ to find and add $r-3$ perfect matchings to $V(Q_n) \backslash (C \cup D)$, thereby creating an $(r-1)$-regular graph among the vertices not in $C \cup D$.  This finishes the construction of $H$.  

Now, we show that $H$ is saturated. It is evidently free of $S_{\kappa \times \kappa \times r \times \kappa}$, as there are no four vertices of degree $\kappa$ adjacent to one another in our construction. However, also note that it is free of $S_{k_1 \times k_2 \times k_3 \times k_4}$, since there are no four adjacent vertices of degree $r$ or greater.  Now, consider some non-edge in $Q_{\kappa}$.  If added, it must be incident to some vertex $v$, which has degree $r-1$.  This edge added, consider two neighbors of $v$, $c \in C$ and $d \in D$.  Note that $c$ also has a neighbor $d_0 \in D$ such that $d_0 \ne d$, as otherwise $c$ and $d_0$ could not be adjacent.  Now, consider the central path $d_0-c-v-d$, and all associated emanating edges.  This almost creates $S_{\kappa \times \kappa \times r \times \kappa}$, except we notice that $c$ and $d$ actually share a vertex as an endpoint of an emanating edge.  However, this is not a problem, since simply not considering this edge gives a copy of $S_{\kappa-1 \times \kappa - 1 \times r \times \kappa-1}$, which, since $\kappa$ can be chosen to be greater than $\max\{k_1, k_2, k_3, k_4\}$, implies saturation.

Finally, we need to scale this graph up to $Q_n$ from $Q_{\kappa}$, in which case we simply place our saturated graph $H$ at the vertices of $Q_{\kappa} \ \Box \ Q_{n - \kappa}$.  Note that we do not need to worry about respective vertices of our Hamming codes being adjacent, because adding edges between them creates an $S_{\kappa \times \kappa \times r \times \kappa}$ with central vertices $c_0 - c_1 - d_0 - d_1$. 

From here, all that remains is to enumerate the number of edges, which gives an upper bound of
\[(2\kappa - 1) \cdot \frac{2^{\kappa}}{\kappa +1} + (r-3) \cdot \frac{\kappa 2^{\kappa-1} - 2^{\kappa}}{\kappa + 1} \le r \cdot 2^{n-1}.\]

Now, consider the second case, in which $k_1= \min\{k_1, k_2, k_3, k_4\}$ or $k_4 = \min\{k_1, k_2, k_3, k_4\}$, and let $r = \min\{k_1, k_2, k_3, k_4\}$. We again use a $\kappa$-construction with $i = 0$, and will find a $S_{\kappa - 1 \times \kappa - 1 \times \kappa - 1 \times r}$-saturated graph within $Q_{\kappa}$.

Let us begin by denoting our saturated graph by $H$.  Now, consider three Hamming codes in $Q_{\kappa}$, say $C$, $D = C + v_1$, and $E = C + v_2$. From here, add all edges incident to the vertices in each of the Hamming codes.  Then, among the remaining vertices not in $C \cup D \cup E$, which at the moment form a $3$-regular bipartite graph, find $r-4$ perfect matchings using Lemma $\ref{lem:bipartitehall}$ and add these edges, thereby creating a subgraph $H$ where all vertices in $C \cup D \cup E$ have degree $\kappa$, and all other vertices have degree $r-1$.  

From here, all that remains is to show that $H$ is saturated in $Q_{\kappa}$, and then demonstrating that we can scale this construction up to $Q_n$.  To do the former, we first note that it is clearly $H$-free, since there are no paths of length three among vertices with degree $r$ or greater, since by construction our Hamming codes form paths of length no more than two. Furthermore, if we add any non-edge to $H$, it must necessarily be incident to some vertex $v$ with degree $r-1$.  Considering its neighbor $d \in D$, $d$'s neighbor $c \in C$, and $c$'s neighbor $e \in E$, it can be seen that $v-d-c-e$ creates a copy of $S_{\kappa \times \kappa \times \kappa \times r}$.  The only problem arises when the stars around $d$ and $e$ share some vertex, but since this can only occur at one vertex besides $v$, we can simply not consider these edges and still have $S_{\kappa-1 \times \kappa - 1 \times \kappa - 1 \times r}$ and therefore, by choice of $\kappa$, $S_{k_1 \times k_2 \times k_3 \times k_4}$.  Thus, $H$ is saturated.  To show that this construction can scale, simply place a copy of $H$ at every vertex of $Q_{\kappa} \ \Box \ Q_{n -\kappa}$.  Since connecting any two vertices in respective Hamming codes creates a path of length six with all vertices with degree at least $\kappa$ (sufficiently long for our caterpillar), this subgraph of $Q_n$ is $S_{k_1 \times k_2 \times k_3 \times k_4}$-saturated.

All that remains is to enumerate the number of edges in our graph, which gives us an upper bound of
\[(3\kappa - 2) \cdot \frac{2^{\kappa}}{\kappa +1} + (r-4) \cdot \frac{\kappa 2^{\kappa-1} - 2^{\kappa+1}}{\kappa + 1} \le r \cdot 2^{n-1}.\]
\end{proof}

Now, for our two results on general caterpillars, we present the proof of the former, Theorem \ref{thm:generalcat}; the proof of the latter is similar, so we omit it.
\subsubsection{Proof of Theorem \ref{thm:generalcat}}
\begin{proof}
Let $\max\{k_1, k_2, \ldots, k_m\} = k$, and $\max\{k_j, k_{j+1}\}=r$. We again use a $\kappa$-construction with $i = a$, choosing $\kappa > k+a-1$ for reasons that will become clear later.  We will show that we can find a $S_{\kappa-a+1\times \kappa-a+1 \times \cdots \times \kappa-a+1 \times r \times r \times \kappa-a+1 \times \cdots \times \kappa-a+1}$-saturated graph, which implies our result. 

Let us begin by denoting our saturated subgraph by $H$.  Now, let $j_0 = \max\{j, m - j\}$.  We know that $j_0 < 2^{a}-1$, so $P_{j_0}$ can be embedded in $Q_a$. Furthermore, consider our dominating set $S$ consisting of disjoint $Q_a$'s.  Within our saturated graph $H$, we first add all edges within and incident to $S$.  Then, we use Lemma $\ref{lem:bipartitehall}$ on the remaining vertices to consistently add perfect matchings that give all other vertices degree $r-1$.  However, we do this in such a way that every pair of adjacent vertices in $Q_{\kappa}$ that are also adjacent to the same $Q_a$ in $S$ is connected by an edge.  This requires that $r \ge a$, a condition already satisfied since $a = \left \lfloor \log_{2} m \right \rfloor$.  From here, we now have that every vertex in $S$ has degree $\kappa$, and the remaining vertices in $Q_{\kappa}$ have degree $r-1$. 

Given our construction of $H$, we claim that this subgraph $H$ is saturated with our given caterpillar.  To show this, first note that the longest path in any given $Q_a$ is $2^{a} - 1$.  From there, since no two adjacent vertices in $S_{k_1 \times k_2 \times \cdots \times k_m}$ both have degree $r-1$ or less, the maximum overall central path length in $H$ is $2^{a}+1$.  However, $m > 2^{a} + 1$, so $H$ is $S_{k_1 \times k_2 \times \cdots \times k_m}$-free.  

Now, consider the addition of a non-edge in $Q_{\kappa}$ to $H$.  Since we have already connected all adjacent vertices adjacent to the same disjoint $Q_a$, this edge must connect vertices, say $v$ and $w$, adjacent to different $Q_a$.  These vertices must also necessarily be of degree $r-1$ in $H$.  From here, let $a_{v}$ and $a_{w}$ be the adjacent vertices to $v$ and $w$ in $S$.  Now, consider a path of length $2^{a} - 3$ on the first $Q_a$ ending at $a_{v}$, say $a_1 - a_2 - a_3 - \cdots - a_{v}$, and a path of length $b +1$ in the second $Q_a$ starting at $a_{w}$.  Since the longest path in a $Q_a$ is $2^{a} - 1$, this is evidently possible to construct.  

We claim that, using these vertices and the edges surrounding them, it is possible to create $S_{\kappa-a+1\times \kappa-a+1 \times \cdots \times \kappa-a+1 \times r \times r \times \kappa-a+1 \times \cdots \times \kappa-a+1}$.  To do this, note, given any vertex adjacent to a vertex in our path, there can be at most $a-1$ other vertices in our path also adjacent to it.  Since caterpillars contain no cycles, we see now why $\kappa$ necessarily must be greater than $k+a-1$, as it allows us to keep all such edges vertex-disjoint if we limit the number of edges emanating from each vertex to $\kappa-a+1$, thereby creating $S_{\kappa-a+1\times \kappa-a+1 \times \cdots \times \kappa-a+1 \times r \times r \times \kappa-a+1 \times \cdots \times \kappa-a+1}$. Since $\kappa - a + 1> k $, this applies equally well to our original caterpillar.  Note further that it is clear now why this implies that the graph is $S_{k_1 \times k_2 \times \cdots \times k_m}$-saturated, since the same arguments about saturation apply to this caterpillar, given the minimum degree $r$ is in the right place in the sequence.

The final step is to scale $H$ up to $Q_n$. But, in this case, simply placing $H$ at all vertices of $Q_{\kappa} \ \Box \ Q_{n -\kappa}$ works, since any non-edge either connects two vertices with degree $r-1$ adjacent to different $Q_a$, in which case we are already done, or connects two $Q_a$.  In the latter case, we can construct a path of length $2^{a}- 1 + 1 + 2^{a} - 1 = 2^{a+1} - 1 \ge m$ with all vertices having degree at least $\kappa - a + 1 > k$, so a copy of our original caterpillar is constructed.  From here, all that remains is to enumerate the number of edges in our saturated subgraph of $Q_n$, which gives us, as desired, an upper bound of 
\[2^{n - \kappa} \cdot \left(a \cdot \frac{2^{\kappa - 1}}{\kappa - a + 1} + (\kappa - a) \cdot \frac{2^{\kappa}}{\kappa - a + 1}+(r-2)\cdot \frac{(\kappa - a + 1)\cdot 2^{\kappa-1} - 2^{\kappa-1}}{\kappa - a + 1} \right) \le r \cdot 2^{n-1}. \qedhere \]
\end{proof}

Note that the proof of Theorem $\ref{thm:generalcat}$ functions even if $\emin\{k_1, k_2, \ldots, k_m\} \ne (k_j, k_{j+1})$ as long as there do not exist two pairs $(k_c, k_{c+1})$ and $(k_{d}, k_{d+1})$, where $c\le b-1$ and $d \ge 2^a$, that satisfy $\max\{k_c, k_{c+1}\}\le\max\{k_i, k_{i+1}\}$ and $\max\{k_{d}, k_{d+1}\}\le\max\{k_i, k_{i+1}\}$. From this, it is easy to see that Theorem $\ref{thm:generalcat}$ gives a lower bound for many caterpillars with length $m \ne 2^{i}+1$, provided that the minimum degree of non-leaves is sufficiently large. Theorem $\ref{thm:othergeneralcat}$ helps begin to resolve the case in which $m = 2^{i} + 1$.  Combined, they allow us to find upper bounds on the saturation number of a large portion of caterpillars.

\subsection{Beyond Caterpillars}

Finally, we demonstrate how perfect dominating sets can be used to find bounds for trees other than caterpillars.  In this section, we give only the proof of Theorem $\ref{thm:vgsup}$; the proof of the more general Theorem $\ref{thm:asymmetricgeneral}$ is very similar.  
\subsubsection{Proof of Theorem $\ref{thm:vgsup}$}
\begin{proof}
As before, we begin by considering a $\kappa$-construction with $i = k-1$, in which we have perfect dominating set $S$ of $Q_{k-1}$'s.  We will build our saturated subgraph $H$ in $Q_{\kappa}$.  To start, add to $H$ all edges within and incident to $S$.  Then, add edges between vertices not in $S$ that are adjacent to the same $Q_{k-1}$ in $S$.  Because $r \ge k$, all vertices not in $S$ have a degree of at maximum $r-1$ at this point.  From here, we use Lemma $\ref{lem:bipartitehall}$ on the set of vertices not in $S$ to consistently add perfect matchings until their degree is exactly $r-1$.  At this point, we claim that $H$ is saturated.

To show this, we first note that $H$ must be free of $VGS_{k, m}$.  We can see this because, first of all, the central node of the very generalized star must be in $S$.  Otherwise, since, by assumption, only one of its adjacent vertices has degree greater than or equal to $r$, we cannot have a copy of $VGS_{k, m}$.  However, even if the central vertex is in a $Q_{k-1}$, only $k-1$ of the legs of a $VGS_{k, m}$ can be placed in this $Q_{k-1}$ for obvious reasons.  The remaining leg must use two vertices outside of $S$ as the first two vertices of its final leg.  However, since these vertices have degree $r-1$ and, by assumption, no leg has both initial vertices with degree $r-1$ or less, $H$ is $VGS_{k,m}$-free.

However, if we add any non-edge, by construction it must connect two vertices of degree $r-1$ adjacent to different $Q_{k-1}$'s.  Since now our construction with a central vertex in $S$ creates a copy of $VGS_{k, m}$ by using the leg $i$ satisfying $\min\{\max\{k_{i1}, k_{i2}\}\}  =r$ as our final leg and then constructing the remainder of the leg in the adjacent $Q_{k-1}$ to the second vertex, $H$ is saturated.  Note that we can choose $\kappa$ accordingly so that all of the degrees of stars on the remaining legs are sufficiently large while still remaining vertex-disjoint.

To scale this up to $Q_{n}$, we simply place copies of $H$ at the vertices of $Q_{\kappa} \ \Box \ Q_{n-\kappa}$ and no edges in between.  To see that this maintains saturation, consider the addition of a non-edge between copies of $H$ in $Q_n$.  There are two cases.  First, this non-edge connects vertices of degree $r-1$. In this case, the same construction as in $Q_{\kappa}$ creates a copy of $VGS_{k, m}$, since these vertices are adjacent to different $Q_{k-1}$'s.  The second case is that the non-edge connects two vertices in $S$.  In this case, $k-1$ of our legs can be placed in one $Q_{k-1}$ and the remaining leg, after traversing the newly added edge, in the other $Q_{k-1}$, thereby creating $VGS_{k, m}$.  Therefore, our overall subgraph of $Q_n$ is saturated, and all that remains is to compute the number of edges in our saturated subgraph, giving us an upper bound on the saturation number of:
\[2^{n-\kappa} \cdot \left(\frac{2^{\kappa - k}}{\kappa - k +2} \cdot \kappa 2^{k} + (r-2) \cdot 2^{\kappa - 1} \right) \sim r \cdot 2^{n-1}. \qedhere \]
\end{proof}

Notice that a similar proof can be used to find upper bounds on unbalanced very generalized stars (where the legs are of different lengths), using the same argument as in the generalized star case.  


\section{Conclusion}
\label{sec:conclusion}

In this paper, we examined the saturation number of many forbidden graphs in the hypercube.  We first explored general methods for finding lower and upper bounds on this saturation number for both general graphs and general trees.  We continued by examining specific trees, and then used these to deduce upper bounds on the saturation number of sufficiently high-degree caterpillars.  From these bounds, we suggested two major methods for tackling saturation problems for trees: disjoint subcubes and the Hamming code.  In either case, to fully classify all trees, the question of the exact cubical dimension of a given tree will likely have to be answered.  We, regardless, conjecture the following for trees of sufficiently large minimum degree with respect to their diameter:
\begin{conjecture}
\label{conj:awesomeconjecture}
Given a tree $T$ with $\emin(T) = \delta$, $\sat(Q_n, T) \le (\delta + C-1)\cdot 2^{n-1}$ where $C$ is the maximum distance from any given vertex to the longest central path in $T$.
\end{conjecture}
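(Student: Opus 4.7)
The plan is to extend the perfect dominating set constructions of Section~\ref{sec:treeshamming}, particularly those of Theorems~\ref{thm:generalcat} and \ref{thm:vgsup}, to arbitrary trees of sufficiently large minimum degree. The idea is that the longest central path $P$ of $T$ should play the role of the ``spine'' (analogous to the central path in a caterpillar), while vertices of $T$ at distance at most $C$ from $P$ are realized as peripheral branches that fit within the $Q_s$-components of a perfect dominating set and their immediate neighborhoods.

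First I would fix an edge $(u^*,v^*) \in E(T)$ attaining $\emin(T) = \delta$, let $d$ denote the length of the longest central path $P$, and choose $s$ minimally so that $2^s > d$, ensuring that $Q_s$ contains a Hamiltonian path of length at least $d$. For $\kappa \ge \delta + C$ large enough, Theorem~\ref{thm:hamming_cube} supplies a perfect dominating set $\mathcal{C}$ of $Q_\kappa$ whose induced subgraph is a disjoint union of copies of $Q_s$. Following the template of Theorem~\ref{thm:generalcat}, I would construct $H \subseteq Q_\kappa$ by (i) including all edges inside each $Q_s$-component of $\mathcal{C}$, (ii) including all edges from $\mathcal{C}$ to $V(Q_\kappa) \setminus \mathcal{C}$, and (iii) invoking Lemma~\ref{lem:bipartitehall} on the $(\kappa-1)$-regular bipartite graph on $V(Q_\kappa) \setminus \mathcal{C}$ inherited from $Q_\kappa$ to add perfect matchings until every non-dominating vertex has degree exactly $\delta + C - 2$ in $H$. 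Lifting to $Q_n$ would then proceed via $Q_n = Q_\kappa \,\Box\, Q_{n-\kappa}$, placing a copy of $H$ on each vertex of $Q_{n-\kappa}$ and verifying that inter-copy non-edges also create $T$.

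To prove saturation, for any non-edge $uv$ of $H$ (necessarily joining two non-dominating vertices adjacent to distinct $Q_s$-components $A_u$ and $A_v$) I would exhibit a copy of $T$ in $H + uv$ as follows: embed the emin edge $(u^*, v^*)$ at $uv$, extend the spine $P$ to traverse $A_u$ and $A_v$ using their Hamiltonian paths, and realize each depth-$c$ branch of $T$ with $c \le C$ at the appropriate spine vertex using outgoing edges from the $Q_s$-components into their $(\kappa-s)$-dimensional complement; vertex-disjointness of branches is ensured by the slack $\kappa \ge \delta + C$. The edge count follows the same telescoping as in Theorem~\ref{thm:generalcat} and yields $\sat(Q_n, T) \le (\delta + C - 1) \cdot 2^{n-1}$ to leading order.

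The main obstacle is the $T$-freeness verification. In the caterpillar case ($C = 1$) non-dominating vertices have degree $\delta - 1 < \delta$, so freeness follows because any embedded central path of length exceeding $2^s + 1$ would be forced to traverse two consecutive non-dominating vertices, violating the $\emin$ condition. For $C \ge 2$ the non-dominating degree $\delta + C - 2$ is at least $\delta$, so the spine could in principle weave between $\mathcal{C}$ and its complement; one must then rule out $T$-embeddings by a more delicate argument, most likely by showing that the depth-$C$ branching around each spine vertex exhausts the available vertex-disjoint neighborhoods unless the spine vertex lies in a $Q_s$-component of $\mathcal{C}$. A secondary obstacle is calibrating $s$ so that the $Q_s$-components simultaneously accommodate the spine and a local depth-$C$ branching structure; this may require a refined variant of Theorem~\ref{thm:hamming_cube} or an iterative application of it with $s$ depending on both $d$ and $C$ rather than on $d$ alone.
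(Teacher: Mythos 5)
First, note that the statement you are addressing appears in the paper only as Conjecture~\ref{conj:awesomeconjecture}; the paper offers no proof of it, so your proposal must stand entirely on its own, and as written it is a programme rather than a proof. The decisive gap is the one you flag yourself, and it is fatal rather than merely ``delicate.'' Every Hamming-code argument in Section~\ref{sec:treeshamming} (Theorems~\ref{thm:doublestar}, \ref{thm:quadruplestar}, \ref{thm:generalcat}, \ref{thm:vgsup}) proves $T$-freeness by arranging that every vertex outside the dominating set has degree exactly $r-1 < r = \emin(T)$: since every edge of $T$ has an endpoint of degree at least $\emin(T)$, the dominating set is forced to be a vertex cover of any embedded copy of $T$, and freeness reduces to bounding paths that must alternate into $\mathcal{C}$. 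To reach the conjectured count $(\delta + C - 1)2^{n-1}$ you must raise the outside degree to $\delta + C - 2$, and for $C \ge 2$ this is at least $\delta$, so the vertex-cover property vanishes: a tree all of whose non-leaf degrees equal $\delta$ could in principle embed entirely among non-dominating vertices (each such vertex has $\delta + C - 3$ matching edges plus its pendant edge into $\mathcal{C}$ to spend), and nothing in your construction rules this out. Your suggested repair --- that depth-$C$ branching ``exhausts the available vertex-disjoint neighborhoods unless the spine vertex lies in a $Q_s$-component'' --- is not an argument; it is precisely the missing lemma, and it is not clearly true, since the matchings supplied by Lemma~\ref{lem:bipartitehall} are unstructured and could accidentally assemble large trees in the complement of $\mathcal{C}$.

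Two further points remain open even if freeness were granted. For saturation you must show that every non-edge $uv$ creates a copy of $T$ with the $\emin$ edge placed at $uv$ and all branches of depth up to $C$ realized vertex-disjointly; the paper already has to work to avoid collisions in the much simpler caterpillar case (discarding shared neighbors and settling for degree $\kappa - a + 1$ in Theorem~\ref{thm:generalcat}), and for a general $T$ the branches hanging off nearby spine vertices can collide in ways your sketch does not control, particularly inside a single $Q_s$-component where the spine already consumes most of the vertices. Moreover, the lift to $Q_n$ via Lemma~\ref{lem:endpoints} requires exhibiting an automorphism of $Q_\kappa$ carrying the non-endpoints into the endpoints, which you assert but do not supply. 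In short, the proposal correctly identifies the natural line of attack (and matches the heuristic the authors themselves suggest), but it does not close the conjecture; the statement remains open.
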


In the future, we hope to further examine the saturation number of trees, perhaps culminating in a complete classification.  From there, we wish to move onto classifying the saturation number of cycles in the hypercube, as first proposed in \cite{Morrison}, hopefully ultimately moving towards a complete classification of the saturation number in the hypercube.

\section{Acknowledgments}
We would like to thank Dr. Tanya Khovanova and Dr. John Rickert for helpful suggestions and review of the paper.  We would also like to thank Professor Pavel Etingof, Professor David Jerison, and Dr. Slava Gerovitch for coordinating the research process.  Finally, we would like to acknowledge the Center for Excellence in Education, the Research Science Institute, and the Massachusetts Institute of Technology for their help and support.

\bibliographystyle{ieeetr}
\bibliography{biblio}

\end{document}